\newcommand{\defm}[1]{\emph{#1}}
\theoremstyle{definition}
\newtheorem{definition}{Definition}[section]
\newtheorem{proposition}[definition]{Proposition}
\newtheorem{theorem}[definition]{Theorem}
\newtheorem{corollary}[definition]{Corollary}
\newtheorem{remark}[definition]{Remark}
\newtheorem{lemma}[definition]{Lemma}
\newtheorem{assumption}[definition]{Assumption}
\newtheorem*{remark*}{Remark}
\newtheorem*{lemma*}{Lemma}
\newtheorem*{internalnote*}{Internal note}
\newcommand{\R}{\mathbb{R}}
\newcommand{\C}{\mathbb{C}}
\newcommand{\Rnn}{\mathbb{R}_{\geq0}}
\newcommand{\Rp}{\R_{>0}}
\newcommand{\Np}{\mathbb{N}_+}
\newcommand{\vspan}{\mathrm{span}}
\newcommand{\ball}{\mathcal{B}}
\newcommand{\identity}{\mathrm{id}} %
\newcommand{\Ellp}{\mathcal{L}}
\newcommand{\E}{\mathbb{E}}
\newcommand{\inputset}[1]{\mathcal{#1}}
\newcommand{\K}{\mathbb{K}}
\newcommand{\fs}{\mathcal{H}}
\newcommand{\fm}{\Phi}
\newcommand{\setsys}[1]{\mathcal{#1}}
\newcommand{\scp}{\langle \cdot, \cdot\rangle}
\newcommand{\Hpre}[1]{H^\text{pre}_{#1}}
\begin{document}
\title{Lipschitz and H\"older continuity in Reproducing Kernel Hilbert Spaces\footnote{Preprint, currently under review.}}
\author{Christian Fiedler\\
    Institute for Data Science in Mechanical Engineering (DSME)\\
    RWTH Aachen University\\
    Email \url{fiedler@dsme.rwth-aachen.de}
}
\maketitle
\begin{abstract}
Reproducing kernel Hilbert spaces (RKHSs) are very important function spaces, playing an important role in machine learning, statistics, numerical analysis and pure mathematics. 
Since Lipschitz and H\"older continuity are important regularity properties, with many applications in interpolation, approximation and optimization problems, in this work we investigate these continuity notion in RKHSs. We provide several sufficient conditions as well as an in depth investigation of reproducing kernels inducing prescribed Lipschitz or H\"older continuity.
Apart from new results, we also collect related known results from the literature, making the present work also a convenient reference on this topic.
\end{abstract}

\par\vskip\baselineskip\noindent
\textbf{Keywords} Reproducing kernel Hilbert spaces, Lipschitz continuity, H\"older continuity, integral operators
\par\vskip\baselineskip\noindent
\textbf{MSC2020} 46E22, 51F30, 47B34, 47G10
\section{Introduction}
Reproducing kernel Hilbert spaces (RKHSs) are Hilbert function spaces in which evaluation of functions is continuous with respect to (w.r.t.) the Hilbert space norm. 
These function spaces play an important role in machine learning \cite{scholkopf2002learning,shawe2004kernel}, statistics \cite{BTA04}, numerical analysis \cite{wendland2004scattered,fasshauer2015kernel} (including inter alia function interpolation and approximation problems, numerical solution of partial differential equations, and numerical integration), signal processing \cite{garcia2000orthogonal} and pure mathematics \cite{PR16}. 
The theory of RKHSs is by now very well-developed, and there are many excellent expositions available, for example, \cite{aronszajn1950theory,PR16}.
In particular, the connection between properties of the reproducing kernel of an RKHS and properties of the functions in an RKHS has been thoroughly investigated, with a good overview provided in \cite[Chapter~4]{SC08}.
This connection is important since an RKHS is generated by its reproducing kernel (see \Cref{sec:prelims} for the details), and the latter is user-defined in most applications of RKHSs. By choosing or constructing an appropriate reproducing kernel, tailored function spaces can be created, which can then be used in interpolation, approximation, optimization and related problems.

Particularily relevant for many applications, especially in constructive approximation problems, are regularity properties of function spaces.
In the case of RKHSs, continuity and differentiability of functions is fully determined by the corresponding reproducing kernel, cf. \cite[Lemma~4.29,~Corollary~4.36]{SC08}.
Furthermore, there is a close connection between certain Sobolev spaces and RKHSs, cf. \cite{wendland2004scattered,fasshauer2015kernel}.

Another important regularity notion, which is in between mere continuity and differentiability, is Lipschitz continuity, or more generally H\"older continuity.
Recall that if $(\inputset{X},d_\inputset{X})$ and $(\inputset{Y},d_\inputset{Y})$ are two metric spaces, and $f: \inputset{X}\rightarrow\inputset{Y}$ a function, we call $f$ \defm{Lipschitz continuous} if there exists $L\in\Rnn$ such that for all $x,x'\in\inputset{X}$ we have $d_\inputset{Y}(f(x),f(x'))\leq d_\inputset{X}(x,x')$. Each such $L\in\Rnn$ is called a \defm{Lipschitz constant} for $f$, we sometimes we say that $f$ is $L$-Lipschitz continuous.
Similarly, if there exists $\alpha\in\Rp$ and $L_\alpha\in\Rnn$ such that for all $x,x'\in\inputset{X}$ we have $d_\inputset{Y}(f(x),f(x'))\leq L_\alpha d_\inputset{X}(x,x')^\alpha$, then $f$ is called \defm{$\alpha$-H\"older continuous}, and each such $L_\alpha$ is called a \defm{H\"older constant} for $f$.
In particular, 1-H\"older continuity is Lipschitz continuity.

Lipschitz and H\"older continuity are classic notions that appear prominently for example in the theory of ordinary differential equations \cite{amann2011ordinary} and partial differential equations \cite{evans2022partial}, respectively. H\"older continuity is also frequently used in the theory of nonparametric statistics \cite{tsybakov2009nonparametric,ghosal2017fundamentals}.
Moreover, there is now a considerable and well-developed theory of spaces of Lipschitz continuous functions, cf. \cite{cobzacs2019lipschitz}.
Finally, Lipschitz continuity (and to a lesser extent also H\"older continuity) is used as the foundation of practical algorithms.
For example, Lipschitz continuity (and a known Lipschitz constant) is a core assumption in many global optimization approaches \cite{pinter1995global}.
Lipschitz continuity also forms the basis for many non-stochastic learning algorithms, especially in the context of systems identification \cite{milanese2004set,calliess2014conservative}.
Recently, Lipschitz assumptions have also been used successfully in the context of kernel methods, for example, for Bayesian optimization with safety constraints \cite{sui2015safe}, or function approximation and regression problems with bounded noise \cite{fiedler2022learning}, the latter motivated by the stringent requirements of learning-based robust control, cf. \cite{fiedler2021practical} and \cite{fiedler2021learning} for an in depth discussion of this issue.

All of this forms a strong motivation to investigate Lipschitz and H\"older continuity in RKHSs. In particular, a central question is how (if at all) the Lipschitz or H\"older continuity of the reproducing kernel of an RKHS influences the corresponding continuity properties of RKHS functions.
To the best of our knowledge, there is no systematic investigation into these questions, despite the importance of RKHSs and Lipschitz  and H\"older continuity, respectively, and the considerable effort that went into investigating the connection between kernel properties and RKHS function properties.

That RKHS functions are always Lipschitz continuous w.r.t. the kernel metric, as reviewed in \Cref{sec:lipschitz:kernelMetric}, is well-known.
The more interesting question of Lipschitz and H\"older continuity w.r.t. an arbitrary metric seems to have been barely covered in the literature.
The only previous work we are aware of that explicitly addressing this question, is \cite{ferreira2013positive}.
In the present work, we are closing this gap in the literature. 

\paragraph{Outline and contributions} 
We provide a comprehensive account on Lipschitz and H\"older continuity in RKHSs. On the one hand, this includes a collection of (the relatively few) known results, and on the other hand a systematic investigation of this issue, including characterization and converse results.

In \Cref{sec:prelims}, we recall fundamental results on RKHSs and introduce our notation.

\Cref{sec:lipschitz:kernelMetric} is concerned with Lipschitz continuity w.r.t. the kernel metric induced by the unique reproducing kernel of an RKHS. Most of the results there are known, however, since we are not aware of a systematic exposition thereof, we provide all the details for ease of future reference.

In \Cref{sec:lipschitz:hoelderContMetricSpace}, we investigate H\"older and Lipschitz continuity w.r.t. a given metric. 
First, some preliminary facts regarding bivariate H\"older and Lipschitz continuous functions are provided, some of these seem to have been not noticed before.

We then investigate which continuity properties in an RKHS are induced by H\"older continuous kernels. While the principle arguments are contained already in \cite{ferreira2013positive}, our results are more general and easier to state.
Finally, the converse question is tackled: If all RKHS functions fulfill a H\"older continuity property, what does this mean for the reproducing kernel?
To the best of our knowledge, this problem has not been dealt with before.

One key take-away of \Cref{sec:lipschitz:hoelderContMetricSpace} is the fact that a Lipschitz continuous kernel does not directly lead to Lipschitz continuous RKHS functions. Since Lipschitz continuous functions are desirable in many applications, it would be interesting to construct kernels that induce such RKHS functions.
\Cref{sec:lipschitz:inducing} is concerned with this problem. 
First, we give a characterization of kernels that induce H\"older continuous RKHS functions, a result which is completely new.
Next, we give sufficient conditions in terms of certain integral operators, extending a result from \cite{ferreira2013positive}.
Finally, we give a very general construction based on feature mixtures, vastly generalizing a method from \cite{wu2018d2ke}.

We close in \Cref{sec:conclusion} with a summary and discussion of our results, as well as an outlook to applications and future research directions.

\section{Preliminaries and Background} \label{sec:prelims}
We cover the real and complex case simultaneously, using the symbol $\K$ for $\R$ or $\C$.
Unless noted otherwise, $\inputset{X}$ will be a non-empty set.
We call $\kappa:\inputset{X}\times\inputset{X}\rightarrow\K$ \defm{Hermitian} if for all $x,x'\in\inputset{X}$, we have $\kappa(x,x')=\overline{\kappa(x',x)}$. Note that if $\kappa$ is Hermitian, then $\kappa(x,x)\in\R$ for all $x\in\inputset{X}$.
If $\K=\R$, then $\kappa$ is Hermitian if and only if it is symmetric in its two arguments.

Let us recall some important definitions and facts about RKHSs, following mostly \cite[Chapter~4]{SC08}. Consider a function $k:\inputset{X}\times\inputset{X}\rightarrow\K$, and let $H\subseteq \K^\inputset{X}$ be a Hilbert space of functions on $\inputset{X}$.

We call $k$ a \defm{kernel} (or \defm{$\K$-kernel}) \defm{on $\inputset{X}$} if there exists a $\K$-Hilbert space $\fs$ and a map $\fm:\inputset{X}\rightarrow\fs$ such that
\begin{equation}
    k(x,x') = \langle \Phi(x'), \Phi(x)\rangle_\fs \quad \forall x,x'\in\inputset{X}.
\end{equation}
In this case, we call $\fs$ a \defm{feature space} and $\fm$ a \defm{feature map} for $k$.

The function $k$ is called \defm{positive semidefinite}\footnote{The terminology is not uniform in the literature. Other common terms are \defm{of positive type} and \defm{positive definite}.} if for all $N\in\Np$ and $x_1,\ldots,x_N\in\inputset{X}$, the matrix $(k(x_j,x_i))_{i,j=1,\ldots,N}$ is positive semidefinite in the sense of linear algebra.

We call $H$ a \defm{reproducing kernel Hilbert space (RKHS)}, if for all $x\in\inputset{X}$ the evaluation functionals $\delta_x: H\rightarrow \K$, $f \mapsto f(x)$, are continuous w.r.t. the topology induced by the scalar product of $H$.

The function $k$ is called a \defm{reproducing kernel} for or of $H$, if for all $x\in\inputset{X}$, $k(\cdot,x)\in H$, and for all $f\in H$, $x\in\inputset{X}$, it holds that $f(x)=\langle f, k(\cdot,x)\rangle_H$.

Let us recall some basic facts about RKHSs. The function $k$ is a kernel if and only if it is positive semidefinite. 
The Hilbert space of functions $H$ has a reproducing kernel if and only if $H$ is an RKHS.
In this case, the reproducing kernel is unique and a kernel (and hence also positive semidefinite). Furthermore, $H$ is a feature space for $k$, and $\Phi_k: \inputset{X}\rightarrow H$, $\Phi_k(x)=k(\cdot,x)$ is a feature map for $k$, called the \defm{canonical feature map} of $k$.
Finally, a positive semidefinite $k$ is the reproducing kernel of a uniquely determined Hilbert space of functions, which we denote by $(H_k,\scp_k)$, and the latter is an RKHS.
In particular, the terms kernel, reproducing kernel, and positive semidefinite are equivalent in the context of RKHSs.

Given a positive semidefinite $k$ and its associated RKHS $H_k$, define the \defm{pre-RKHS}
\begin{equation}
    \Hpre{k} = \{ k(\cdot,x) \mid x \in\inputset{X} \}
    = \left\{ \sum_{n=1}^N \alpha_n k(\cdot, x_n) \mid \alpha_1,\ldots,\alpha_N\in\K,\: x_1,\ldots,x_N\in\inputset{X} \right\}.
\end{equation}
It is well-known that for $f,g\in\Hpre{k}$ with representations $f= \sum_{n=1}^N \alpha_n k(\cdot, x_n)$, $g= \sum_{m=1}^M \beta_m k(\cdot, y_m)$,
\begin{equation}
    \langle f, g\rangle_k = \sum_{n=1}^N \sum_{m=1}^M \alpha_n \overline{\beta_m} k(y_m, x_n),
\end{equation}
and $\Hpre{k}$ is dense in $H_k$.

Let $k$ be a kernel on $\inputset{X}$, and $(\fs,\fm)$ a corresponding feature space-feature map pair, then
\begin{equation}
    d_\fm: \inputset{X}\times\inputset{X}\rightarrow\Rnn, \quad d_\fm(x,x')=\|\fm(x)-\fm(x')\|_\fs
\end{equation}
is a semimetric on $\inputset{X}$. If $(\fs,\fm)=(H_k,\Phi_k)$, we set $d_k=d_{\Phi_k}$ and call this the \defm{kernel (semi)metric}.

The next result is well-known, but rarely explicitly stated.
\begin{lemma}
Let $k:\inputset{X}\times\inputset{X}\rightarrow\K$ be a kernel on $\inputset{X}\not=\emptyset$. Then for all  feature space-feature map pairs $(\fs,\fm)$, we have $d_\fm=d_k$.
\end{lemma}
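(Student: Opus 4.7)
The plan is to show that $d_\fm(x,x')^2$ admits a closed-form expression solely in terms of the kernel values $k(x,x)$, $k(x',x')$, and $k(x,x')$, so that this quantity is independent of the particular feature space-feature map pair chosen. Once this is established, applying the identity with the canonical pair $(H_k,\Phi_k)$ yields the same formula for $d_k(x,x')^2$, and equality follows.

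Concretely, I would fix arbitrary $x,x'\in\inputset{X}$ and expand the squared semimetric using the inner product structure of $\fs$:
\begin{equation*}
d_\fm(x,x')^2 = \|\fm(x)-\fm(x')\|_\fs^2 = \langle\fm(x),\fm(x)\rangle_\fs - \langle\fm(x),\fm(x')\rangle_\fs - \langle\fm(x'),\fm(x)\rangle_\fs + \langle\fm(x'),\fm(x')\rangle_\fs.
\end{equation*}
Each of the four inner products is a kernel value by the definition of a feature space-feature map pair: $\langle\fm(x),\fm(x)\rangle_\fs = k(x,x)$, $\langle\fm(x'),\fm(x')\rangle_\fs = k(x',x')$, $\langle\fm(x'),\fm(x)\rangle_\fs = k(x,x')$, and $\langle\fm(x),\fm(x')\rangle_\fs = k(x',x) = \overline{k(x,x')}$, where the last equality uses that every kernel is Hermitian (which follows from positive semidefiniteness, or alternatively can be read directly off the inner product identity by swapping arguments). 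Combining the two cross-terms therefore gives $2\operatorname{Re} k(x,x')$, and altogether
\begin{equation*}
d_\fm(x,x')^2 = k(x,x) + k(x',x') - 2\operatorname{Re} k(x,x').
\end{equation*}

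The right-hand side depends only on $k$ and on the points $x,x'$, but not on the particular pair $(\fs,\fm)$. Specializing this computation to the canonical feature space-feature map pair $(H_k,\Phi_k)$, which is a valid choice by the facts recalled just before the lemma, produces exactly the same right-hand side for $d_k(x,x')^2$. Therefore $d_\fm(x,x')^2 = d_k(x,x')^2$, and since both semimetrics are non-negative, $d_\fm(x,x') = d_k(x,x')$ for all $x,x'\in\inputset{X}$, which is the claim.

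There is no real obstacle here: the proof is a one-line polarization-type identity, and the only mild care needed is handling the complex case correctly so that the cross-terms combine into $2\operatorname{Re} k(x,x')$ rather than $2k(x,x')$. In the real case, this simply reduces to $2k(x,x')$ and Hermiticity becomes symmetry, so no separate argument is required.
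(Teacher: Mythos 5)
Your proof is correct and follows essentially the same route as the paper's: expand the squared semimetric via the inner product, rewrite the four terms as kernel values, and observe that the resulting expression is independent of the chosen feature pair, hence equal to the one obtained from the canonical pair $(H_k,\Phi_k)$. If anything, your version is slightly more careful — you carry the correct minus signs on the cross terms and combine them into $2\operatorname{Re}\,k(x,x')$, whereas the paper's displayed expansion writes all four terms with plus signs (an evident typo that does not affect the conclusion).
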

When working with $d_k$, this result allows us to work with $d_\fm$ instead, where $\fm$ is any feature map, and vice versa.
\begin{proof}
Let $(\fs,\fm)$ be a feature space-feature map pair, and $x,x'\in\inputset{X}$ be arbitrary. We then have
\begin{align*}
    d_\fm(x,x') & = \|\fm(x)-\fm(x')\|_\fs \\
    & = \sqrt{\langle \fm(x)-\fm(x'), \fm(x)-\fm(x')\rangle_\fs } \\
    & = \sqrt{\langle \fm(x), \fm(x)\rangle_\fs + \langle \fm(x), \fm(x')\rangle_\fs + \langle \fm(x'), \fm(x)\rangle_\fs  + \langle \fm(x'), \fm(x')\rangle_\fs} \\
    & = \sqrt{k(x,x) + k(x,x') + k(x',x) + k(x',x')} \\
    & = \sqrt{\langle k(\cdot,x) - k(\cdot,x'),  k(\cdot,x) - k(\cdot,x') \rangle_k} \\
    & = \|\Phi_k(x) - \Phi_k(x')\|_k \\
    & = d_k(x,x'),
\end{align*}
establishing the claim.
\end{proof}
Since we state several results for bounded kernels or bounded RKHS functions, we recall the following characterization of boundedness in RKHSs.
\begin{lemma} \label{properties:lipschitz:boundednessEquiv}
Let $\inputset{X}\not=\emptyset$ be some set and $k:\inputset{X}\times\inputset{X}\rightarrow\K$ a kernel on $\inputset{X}$. The following statements are equivalent.
\begin{enumerate}
    \item $k$ is bounded
    \item $\|k\|_\infty := \sup_{x\in \inputset{X}} \sqrt{k(x,x)} < \infty$
    \item There exists a feature space-feature map pair $(\fs,\fm)$ such that $\fm$ is bounded
    \item For all feature space-feature map pairs $(\fs,\fm)$, $\fm$ is bounded
    \item All $f\in H_k$ are bounded 
\end{enumerate}
If any of the statements is true, then for all feature space-feature map pairs $(\fs,\fm)$, we have $\|k\|_\infty=\sup_{x\in\inputset{X}}\|\fm(x)\|_\fs$, and $|f(x)|\leq \|f\|_k \|k\|_\infty$, for all $f\in H_k$ and $x\in\inputset{X}$.
\end{lemma}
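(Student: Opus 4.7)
The plan is to establish the equivalences via a short cycle, using the fundamental identity $\|\fm(x)\|_\fs^2 = \langle \fm(x),\fm(x)\rangle_\fs = k(x,x)$, which holds for every feature space-feature map pair $(\fs,\fm)$.

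First I would handle the easy equivalences (1) $\Leftrightarrow$ (2) $\Leftrightarrow$ (3) $\Leftrightarrow$ (4). The implication (1) $\Rightarrow$ (2) is immediate since $k(x,x) \geq 0$ (from positive semidefiniteness applied to the $1\times 1$ matrix), hence $\sqrt{k(x,x)} \leq \sqrt{\|k\|_\infty'}$ where $\|k\|_\infty'$ is any uniform bound on $|k|$. The chain (2) $\Rightarrow$ (4) $\Rightarrow$ (3) is immediate from the identity $\|\fm(x)\|_\fs = \sqrt{k(x,x)}$ together with the existence of the canonical feature map $\Phi_k$. Finally (3) $\Rightarrow$ (1) follows from Cauchy-Schwarz: if some feature map $\fm$ satisfies $\sup_x\|\fm(x)\|_\fs =: M < \infty$, then $|k(x,x')| = |\langle \fm(x'),\fm(x)\rangle_\fs| \leq M^2$. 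As a by-product, the chain establishes the identity $\|k\|_\infty = \sup_{x\in\inputset{X}}\|\fm(x)\|_\fs$ for any feature space-feature map pair.

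Next I would turn to (5). The implication (4) $\Rightarrow$ (5) uses the canonical feature map together with the reproducing property and Cauchy-Schwarz: for any $f\in H_k$ and $x\in\inputset{X}$,
\begin{equation*}
    |f(x)| = |\langle f, k(\cdot,x)\rangle_k| \leq \|f\|_k \|\Phi_k(x)\|_k \leq \|f\|_k \|k\|_\infty,
\end{equation*}
which simultaneously establishes the final quantitative bound stated in the lemma and shows that every $f\in H_k$ is bounded.

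The only step that needs care is (5) $\Rightarrow$ (4), and I expect this to be the main obstacle. Pointwise boundedness of each $f\in H_k$ must be upgraded to uniform boundedness of the evaluation functionals $\delta_x$, whose operator norms equal $\|\Phi_k(x)\|_k = \sqrt{k(x,x)}$. This calls for the uniform boundedness principle (Banach-Steinhaus): the family $\{\delta_x : x\in\inputset{X}\}$ consists of continuous linear functionals on the Hilbert space $H_k$ (continuity being the defining property of an RKHS), and assumption (5) says that for every $f\in H_k$ the orbit $\{\delta_x(f) : x\in\inputset{X}\}$ is bounded. Banach-Steinhaus then yields $\sup_x\|\delta_x\|_{H_k^*} < \infty$, i.e.\ $\sup_x\sqrt{k(x,x)} < \infty$, which by the identity above gives (4) for the canonical feature map. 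Combining with the already-established (2) $\Rightarrow$ (4) closes the cycle.
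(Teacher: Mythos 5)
Your proposal is correct, and for the first four items and the two quantitative claims it runs along exactly the same lines as the paper: the identity $\|\fm(x)\|_\fs=\sqrt{k(x,x)}$ valid for every feature space-feature map pair, Cauchy--Schwarz for $|k(x,x')|\leq\sqrt{k(x,x)}\sqrt{k(x',x')}$, and the reproducing property for $|f(x)|\leq\|f\|_k\sqrt{k(x,x)}$. The only point of genuine divergence is the direction ``all $f\in H_k$ bounded $\Rightarrow$ $k$ bounded'': the paper does not prove this at all but cites \cite[Lemma~4.23]{SC08}, whose proof in that reference applies the closed graph theorem to the inclusion $H_k\rightarrow\ell_\infty(\inputset{X})$. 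You instead apply Banach--Steinhaus directly to the family of evaluation functionals $\{\delta_x\}_{x\in\inputset{X}}$, which are continuous by the RKHS property and pointwise bounded by assumption, and then use $\|\delta_x\|=\|k(\cdot,x)\|_k=\sqrt{k(x,x)}$. This is a correct and fully self-contained argument; it is arguably slightly more direct than the closed-graph route since it bypasses the intermediate target space $\ell_\infty(\inputset{X})$, at the cost of spelling out what the paper delegates to the literature. You correctly identify this as the only step requiring a nontrivial functional-analytic input; everything else in your cycle of implications is sound.
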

\begin{proof}
Let $(\fs,\fm)$ be any feature space-feature map. For $x,x'\in\inputset{X}$ we have
\begin{equation*}
    |k(x,x')|=|\langle \fm(x'), \fm(x) \rangle_\fs| \leq \|\fm(x')\|_\fs \|\fm(x)\|_\fs = \sqrt{k(x',x')}\sqrt{k(x,x)},
\end{equation*}
and the equivalence of the first four items is now clear.
The equivalence between the first and last item is provided by \cite[Lemma~4.23]{SC08}.

Finally, since for any feature space-feature map pair $(\fs,\fm)$, and all $x\in\inputset{X}$, we have $\sqrt{k(x,x)}=\|\fm(x)\|_\fs$, and for all $f\in H_k$ we have $|f(x)|=|\langle f, k(\cdot,x)\rangle_k|\leq \|f\|_k \sqrt{k(x,x)}$, the last assertion follows.
\end{proof}
Finally, we recall the following result on Parseval frames in an RKHS, which corresponds to \cite[Theorem~2.10,~Exercise~3.7]{PR16}, and is called \emph{Papadakis Theorem} there.
\begin{theorem} \label{thm:papadakis}
Let $\inputset{X}\not=\emptyset$ be a set and $k:\inputset{X}\times\inputset{X}\rightarrow\K$ a kernel on $\inputset{X}$.
\begin{enumerate}
    \item If $(f_i)_{i\in I}$ is a Parseval frame in $H_k$, then for all $x,x'\in\inputset{X}$
    \begin{equation}
        k(x,x')=\sum_{i \in I} f_i(x) \overline{f_i(x')},
    \end{equation}
    where the convergence is pointwise.
    \item Consider a family of functions $(f_i)_{i\in I}$, where $f_i \in \K^\inputset{X}$ for all $i\in I$, such that
    \begin{equation}
        k(x,x')=\sum_{i \in I} f_i(x) \overline{f_i(x')}
    \end{equation}
    for all $x,x'\in\inputset{X}$, where the convergence is pointwise. Then $f_i\in H_k$ for all $i\in I$, and $(f_i)_{i\in I}$ is a Parseval frame in $H_k$.
\end{enumerate}
\end{theorem}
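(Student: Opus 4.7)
The plan is to treat the two parts separately, with part (1) being essentially a single application of the Parseval identity and part (2) requiring the standard feature-map characterization of the RKHS.

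For part (1), recall that a Parseval frame $(f_i)_{i\in I}$ satisfies the polarized identity $\langle u,v\rangle_k = \sum_{i\in I} \langle u, f_i\rangle_k\,\overline{\langle v, f_i\rangle_k}$ for all $u,v\in H_k$. First I would apply this with $u = k(\cdot, x')$ and $v = k(\cdot, x)$. The left-hand side collapses via the reproducing property to $k(x,x')$, and the inner products on the right become $\langle k(\cdot,x'), f_i\rangle_k = \overline{f_i(x')}$ and $\overline{\langle k(\cdot,x), f_i\rangle_k} = f_i(x)$, yielding the claimed pointwise identity directly.

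For part (2), my starting point is the observation that setting $x=x'$ in the hypothesis gives $\sum_i |f_i(x)|^2 = k(x,x) < \infty$, so the map
\begin{equation*}
\fm : \inputset{X} \to \ell^2(I), \qquad \fm(x) = (\overline{f_i(x)})_{i\in I}
\end{equation*}
takes values in $\ell^2(I)$, and a direct check shows $\langle \fm(x'),\fm(x)\rangle_{\ell^2} = k(x,x')$. Hence $(\ell^2(I), \fm)$ is a feature space-feature map pair for $k$. I would then invoke the standard characterization of the RKHS in terms of an arbitrary feature map (as in \cite[Theorem~4.21]{SC08}): the map $V: \ell^2(I) \to \K^\inputset{X}$ defined by $V(w)(x) = \langle w, \fm(x)\rangle_{\ell^2}$ actually maps into $H_k$, is surjective onto $H_k$, and restricts to an isometric isomorphism from $(\ker V)^\perp$ onto $H_k$. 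Evaluating on the standard basis yields $V(e_j)(x) = \overline{\fm(x)_j} = f_j(x)$, so in particular $f_j = V(e_j) \in H_k$.

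It then remains to show that $(f_i)_{i\in I}$ is a Parseval frame. Since $V$ is a metric surjection onto $H_k$, its Hilbert-space adjoint $V^*: H_k \to \ell^2(I)$ is an isometric embedding, and $(V^*g)_i = \langle V^*g, e_i\rangle_{\ell^2} = \langle g, V(e_i)\rangle_k = \langle g, f_i\rangle_k$, so
\begin{equation*}
\|g\|_k^2 \;=\; \|V^*g\|_{\ell^2}^2 \;=\; \sum_{i\in I} |\langle g, f_i\rangle_k|^2 \qquad \forall g\in H_k,
\end{equation*}
which is the Parseval frame property. The one subtlety I expect is the need to cite (or briefly reprove) the metric-surjection property of $V$, i.e. that every feature map gives the RKHS as its range and that $V$ is a coisometry onto $H_k$; once this general fact is in place, both the membership $f_i\in H_k$ and the frame identity fall out immediately. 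As a fallback avoiding that citation, one can verify the identity $\|g\|_k^2 = \sum_i |\langle g,f_i\rangle_k|^2$ directly on the dense pre-RKHS $\Hpre{k}$ by expanding $g = \sum_m \beta_m k(\cdot,y_m)$, using the hypothesis to rewrite $\sum_i \overline{f_i(y_m)} f_i(y_{m'}) = k(y_{m'},y_m)$, and then extending by continuity.
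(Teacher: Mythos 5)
Your proof is correct. Note that the paper does not actually prove this theorem---it is quoted verbatim from \cite{PR16} (Theorem~2.10 and Exercise~3.7 there), so there is no in-paper argument to compare against; your route (polarization of the Parseval identity for part~1; the feature map $\fm(x)=(\overline{f_i(x)})_{i\in I}\in\ell^2(I)$ together with the metric-surjection characterization of $H_k$ from \cite[Theorem~4.21]{SC08} for part~2) is essentially the standard proof, and the one given in \cite{PR16}. Two small points to make explicit if you write this up: the identity $VV^*=\identity_{H_k}$, which you use to get $\|V^*g\|_{\ell^2}=\|g\|_k$, is exactly the content of the cited coisometry statement, so the citation does cover it; and in your fallback argument the passage from the dense pre-RKHS to all of $H_k$ needs the (easy) observation that the isometric analysis operator defined on $\Hpre{k}$ extends to an isometry whose value at a general $g$ is still $(\langle g,f_i\rangle_k)_{i\in I}$, which follows from continuity of each coordinate functional together with the fact that norm convergence in $\ell^2(I)$ implies coordinatewise convergence.
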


\section{Lipschitz continuity and the kernel metric} \label{sec:lipschitz:kernelMetric}
We just saw that a kernel $k$ on an arbitrary set $\inputset{X}\not=\emptyset$ metrizes this set through the kernel (semi)metric $d_k$.
Note that this holds for \emph{any} set $\inputset{X}$, no matter whether it has additional structure on it or not.
It is therefore natural to investigate Lipschitz continuity of RKHS functions w.r.t. the kernel metric.
We start with the following classic result, which seems to be folklore. %
\begin{proposition} \label{properties:lipschitz:rkhsFuncslipschitzContinuousKernelMetric}
    Let $\inputset{X}\not=\emptyset$ be some set, $k:\inputset{X}\times\inputset{X}\rightarrow\K$ a kernel on $\inputset{X}$, and $d_k$ the corresponding kernel (semi)metric.
    For all $f\in H_k$, we have that $f$ is Lipschitz continuous w.r.t. $d_k$ with Lipschitz constant $\|f\|_k$.
\end{proposition}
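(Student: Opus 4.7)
The plan is to apply the reproducing property of $k$ together with the Cauchy-Schwarz inequality in $H_k$, which is exactly the standard one-line argument that turns the reproducing property into a continuity statement.

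More concretely, fix $f \in H_k$ and arbitrary $x, x' \in \inputset{X}$. First I would rewrite the difference $f(x) - f(x')$ using the reproducing property: since $f(x) = \langle f, k(\cdot, x)\rangle_k$ and $f(x') = \langle f, k(\cdot, x')\rangle_k$, linearity of the inner product in the first slot gives
\[
    f(x) - f(x') = \langle f, k(\cdot, x) - k(\cdot, x')\rangle_k.
\]
Note we need to be careful about which slot carries the conjugation in the complex case; since the $k(\cdot, y)$ appears in the second (conjugate-linear) slot, linearity in the second argument gives the above identity in both the real and complex setting.

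Next I would take absolute values and apply Cauchy-Schwarz in $H_k$ to obtain
\[
    |f(x) - f(x')| \leq \|f\|_k \, \|k(\cdot, x) - k(\cdot, x')\|_k.
\]
Then I would identify the right-hand norm as $d_k(x, x')$: by definition of the canonical feature map $\Phi_k(y) = k(\cdot, y)$, we have $\|k(\cdot, x) - k(\cdot, x')\|_k = \|\Phi_k(x) - \Phi_k(x')\|_k = d_k(x, x')$. Combining these two displays yields $|f(x) - f(x')| \leq \|f\|_k \, d_k(x, x')$, which is precisely Lipschitz continuity with constant $\|f\|_k$ as claimed.

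I do not anticipate any real obstacle here; the argument is essentially a direct combination of the reproducing property and Cauchy-Schwarz. The only subtle point worth stating carefully is the conjugation convention in the complex case, which is why I would emphasize that $k(\cdot, y)$ sits in the second slot of the inner product. One could also state the proof in terms of an arbitrary feature map $\fm$ using the earlier lemma $d_\fm = d_k$, but writing it directly with $\Phi_k$ is the cleanest path.
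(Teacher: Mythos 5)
Your proof is correct and is essentially identical to the paper's: both rewrite $f(x)-f(x')$ as $\langle f, k(\cdot,x)-k(\cdot,x')\rangle_k$ via the reproducing property, apply Cauchy--Schwarz, and identify $\|k(\cdot,x)-k(\cdot,x')\|_k$ with $d_k(x,x')$. Your extra remark on the conjugation convention is harmless (only additivity of the second slot is needed), and nothing further is required.
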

In other words, RKHS functions are always Lipschitz continuous w.r.t. the kernel (semi)metric, and their RKHS norm is a Lipschitz constant. This reinforces the intuition that the RKHS norm is a measure of complexity or smoothness of an RKHS function w.r.t. a kernel: The smaller the RKHS norm, the smaller the Lipschitz bound of an RKHS function w.r.t. to the kernel (semi)metric.
\begin{proof}
    Let $f\in H_k$ and $x,x'\in \inputset{X}$ be arbitrary, then
    \begin{equation*}
        |f(x)-f(x')| = |\langle f, k(\cdot,x) - k(\cdot,x')\rangle_k| \leq \|f\|_k \|k(\cdot,x)-k(\cdot,x')\|_k = \|f\|_k d_k(x,x')
    \end{equation*}
\end{proof}
The next result seems to be less well-known. Parts of it can be found for example in \cite[Proposition~2.4]{alpay2021new}.
\begin{proposition}
Let $\inputset{X}\not=\emptyset$ be some set, and $k:\inputset{X}\times\inputset{X}\rightarrow\K$ a kernel on $\inputset{X}$.
\begin{enumerate}
    \item The function $k(\cdot,x)\in H_k$ is Lipschitz continuous w.r.t. $d_k$ with Lipschitz constant $\sqrt{k(x,x)}$, for all $x\in\inputset{X}$.
    \item For all $x_1,x_1',x_2,x_2'\in\inputset{X}$, {\tiny
    \begin{align}
        |k(x_1,x_2)-k(x_1',x_2')| \leq \min\left\{
            \max\left\{\sqrt{k(x_2,x_2)}, \sqrt{k(x_1',x_1')} \right\}, 
            \max\left\{\sqrt{k(x_1,x_1)}, \sqrt{k(x_2',x_2')} \right\}
        \right\}(d_k(x_1,x_1')+ d_k(x_2,x_2')).
    \end{align} }
    If $k$ is bounded, then it is Lipschitz continuous w.r.t. the product metric on $\inputset{X}\times\inputset{X}$ with Lipschitz constant $\|k\|_\infty$.
    \item For all $x,x'\in\inputset{X}$,
    \begin{equation}
        |k(x,x)-k(x',x')| \leq 2\max\{\sqrt{k(x,x)}, \sqrt{k(x',x')}\}d_k(x,x').
    \end{equation}
    If $k$ is bounded, then $x \mapsto k(x,x)$ is Lipschitz continuous w.r.t. $d_k$ with Lipschitz constant $2\|k\|_\infty$.
    \item The function $x \mapsto \sqrt{k(x,x)}$ is Lipschitz continuous w.r.t. $d_k$ and 1 is a Lipschitz constant.
    \item If $(\fs,\fm)$ is any feature space-feature map-pair, then $\Phi$ is Lipschitz continuous w.r.t. $d_k$ with Lipschitz constant 1.
\end{enumerate}
\end{proposition}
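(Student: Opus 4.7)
The plan is to treat the five items in the logical order dictated by their dependencies, rather than the order stated. Item (1) is an immediate corollary of \Cref{properties:lipschitz:rkhsFuncslipschitzContinuousKernelMetric}: apply it to $f := k(\cdot,x) \in H_k$ and observe that $\|k(\cdot,x)\|_k^2 = \langle k(\cdot,x), k(\cdot,x)\rangle_k = k(x,x)$ by the reproducing property. Item (5) is essentially a restatement of the preceding lemma: for any feature space-feature map pair $(\fs,\fm)$, that lemma gives $d_\fm = d_k$, so $\|\fm(x)-\fm(x')\|_\fs = d_k(x,x')$, which is 1-Lipschitzness by definition. Item (4) then follows from the reverse triangle inequality in $H_k$: since $\sqrt{k(x,x)} = \|\Phi_k(x)\|_k$, we get
\[
\bigl|\sqrt{k(x,x)} - \sqrt{k(x',x')}\bigr| = \bigl|\|\Phi_k(x)\|_k - \|\Phi_k(x')\|_k\bigr| \leq \|\Phi_k(x) - \Phi_k(x')\|_k = d_k(x,x').
\]

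The main technical step is item (2); item (3) will then drop out as a special case. For item (2), I would pick any feature space-feature map pair $(\fs,\fm)$ (for instance the canonical one) and use a telescoping trick. The first bound inside the minimum comes from the decomposition
\[
k(x_1, x_2) - k(x_1', x_2') = \langle \fm(x_2), \fm(x_1) - \fm(x_1')\rangle_\fs + \langle \fm(x_2) - \fm(x_2'), \fm(x_1')\rangle_\fs,
\]
followed by Cauchy-Schwarz and the identities $\|\fm(x)\|_\fs = \sqrt{k(x,x)}$ and $d_\fm = d_k$, giving
\[
|k(x_1, x_2) - k(x_1', x_2')| \leq \sqrt{k(x_2,x_2)}\, d_k(x_1,x_1') + \sqrt{k(x_1',x_1')}\, d_k(x_2,x_2').
\]
Bounding each factor by $\max\{\sqrt{k(x_2,x_2)}, \sqrt{k(x_1',x_1')}\}$ yields the first entry of the minimum. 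The other entry arises from the symmetric decomposition inserting $\langle \fm(x_2'), \fm(x_1)\rangle_\fs$ instead, and taking the smaller of the two bounds produces the stated inequality. If $k$ is bounded, then both roots are dominated by $\|k\|_\infty$, and Lipschitz continuity of $k$ on $\inputset{X}\times\inputset{X}$ with the product metric follows.

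For item (3), specialize item (2) to $x_1 = x_2 = x$ and $x_1' = x_2' = x'$: both choices in the minimum collapse to $\max\{\sqrt{k(x,x)}, \sqrt{k(x',x')}\}$, and the sum $d_k(x,x') + d_k(x,x')$ gives the factor of $2$. Boundedness of $k$ again lets us replace the maximum by $\|k\|_\infty$.

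The only real obstacle is bookkeeping in item (2): choosing which mixed term to insert, and noticing that the two natural decompositions give two symmetric bounds, so one can take the minimum of the two. Everything else is a direct application of Cauchy-Schwarz, the reverse triangle inequality, and the already-established identification $d_\fm = d_k$.
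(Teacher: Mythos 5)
Your proposal is correct and follows essentially the same route as the paper: item (1) from the earlier proposition via $\|k(\cdot,x)\|_k=\sqrt{k(x,x)}$, item (2) by telescoping and taking the minimum over the two symmetric decompositions, item (3) as the diagonal specialization, item (4) by the reverse triangle inequality, and item (5) from $d_\fm=d_k$. The only cosmetic difference is that you carry out the item-(2) telescoping as an inner-product identity in a feature space followed by Cauchy--Schwarz, whereas the paper telescopes the kernel values, invokes Hermitian symmetry, and then applies item (1); these are the same computation.
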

\begin{proof}
The first item follows immediately from \Cref{properties:lipschitz:rkhsFuncslipschitzContinuousKernelMetric} clear since $\|k(\cdot,x)\|_k=\sqrt{k(x,x)}$.

To show the second item, let $x_1,x_1',x_2,x_2'\in\inputset{X}$, then
\begin{align*}
    |k(x_1,x_2)-k(x_1',x_2')| & \leq | |k(x_1,x_2)-k(x_1',x_2)| + |k(x_1',x_2) - k(x_1',x_2')| \\
    & = |k(x_1,x_2)-k(x_1',x_2)| + |k(x_2,x_1') - k(x_2',x_1')| \\
    & \leq \sqrt{k(x_2,x_2)}d_k(x_1,x_1') + \sqrt{k(x_1',x_1')}d_k(x_2,x_2') \\
    & \leq \max\left\{\sqrt{k(x_2,x_2)}, \sqrt{k(x_1',x_1')} \right\}(d_k(x_1,x_1')+ d_k(x_2,x_2')).
\end{align*}
Repeating this computation with $x_1, x_2'$ instead of $x_2,x_1'$ establishes the claim.

The next item is now an immediate consequence.

For the second to last item, let $x,x'\in\inputset{X}$, then the converse triangle inequality (in $H_k$) leads to
\begin{equation*}
    |\sqrt{k(x,x)}-\sqrt{k(x',x')}| = |\|k(\cdot,x)\|_k - \|k(\cdot,x')\|_k| \leq \| k(\cdot,x)-k(\cdot,x')\| = d_k(x,x'),
\end{equation*}
so $x \mapsto \sqrt{k(x,x)}$ is indeed 1-Lipschitz w.r.t. $d_k$.

The last item is clear.
\end{proof}
\section{Lipschitz and H\"older continuity on metric spaces} \label{sec:lipschitz:hoelderContMetricSpace}
As we recalled in the preceding section, RKHS functions are always Lipschitz continuous w.r.t. the kernel (semi)metric. 
However, this metric is in general independent of any additional structure on the input set. In particular, if the input set is already a metric space, then this structure is essentially ignored by the kernel (semi)metric. 
In many applications, we are given a metric space as input set, and we would like to have Lipschitz or H\"older continuity of RKHS functions w.r.t. to the existing metric on the input space. We will now investigate this question in depth.
\subsection{Preliminaries}
Since kernels are special bivariate functions, we present some preliminary material on H\"older and Lipschitz continuity of general functions of two variables. Everything in this subsection is elementary and probably known, but we could not locate explicit references, hence we provide all the details.

Let $(\inputset{X},d_\inputset{X})$ be a metric space and $\kappa:\inputset{X}\times\inputset{X}\rightarrow\K$ some function.
\begin{lemma} \label{properties:lipschitz:hoelderImpliesSeperateHoelder}
Assume that there exist a constant $\alpha\in\Rp$, some function $L_\alpha: \inputset{X}\rightarrow\Rnn$, and for all $x\in\inputset{X}$ a set $U_x\subseteq\inputset{X}$ with $x\in U_x$, such that for all $x_1,x_1',x_2,x_2'\in\inputset{X}$ we have
\begin{equation}
|\kappa(x_1,x_2)-\kappa(x_1',x_2')| \leq L_\alpha(x)(d_\inputset{X}(x_1,x_1')^\alpha + d_\inputset{X}(x_2,x_2')^\alpha).
\end{equation}
\begin{enumerate}
    \item For all $x_2\in\inputset{X}$ and all $x_1,x_1'\in U_{x_2}$, we have that
    \begin{equation}
    |\kappa(x_1,x_2)-\kappa(x_1',x_2)| \leq L_\alpha(x)d_\inputset{X}(x_1,x_1')^\alpha.
    \end{equation}
    \item Assume furthermore that $\kappa$ is Hermitian. We then have for all $x\in\inputset{X}$ and $x'\in U_x$ with $x\in U_{x'}$ that
    \begin{equation}
        |\kappa(x)-\kappa(x')|\leq (L_\alpha(x)+L_\alpha(x')) d_\inputset{X}(x,x')^\alpha,
    \end{equation}
    where we defined $\kappa(x):=\kappa(x,x)$.
\end{enumerate}
\end{lemma}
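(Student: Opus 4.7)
The plan is to prove both parts by straightforward specialisation of the bivariate hypothesis, with the second part requiring a telescoping split and a single use of the Hermitian property. I note first that the occurrence of $L_\alpha(x)$ on the right-hand side of the hypothesis is presumably meant to be indexed by the same point $x$ governing the neighbourhood $U_x$ in which the four points $x_1,x_1',x_2,x_2'$ lie; I will work under this reading throughout.

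For the first item I would set $x_2'=x_2$ in the given inequality. Then $d_\inputset{X}(x_2,x_2')=0$, so the $\alpha$-th power of that distance vanishes, and the assumed bound collapses exactly to
\begin{equation*}
|\kappa(x_1,x_2)-\kappa(x_1',x_2)| \leq L_\alpha(x_2)\, d_\inputset{X}(x_1,x_1')^\alpha.
\end{equation*}
The only thing to check is that the four points $x_1,x_1',x_2,x_2$ all lie in $U_{x_2}$, which is immediate from $x_2 \in U_{x_2}$ and the assumption $x_1,x_1'\in U_{x_2}$.

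For the second item I would use the additive telescope
\begin{equation*}
\kappa(x,x)-\kappa(x',x') = \bigl[\kappa(x,x)-\kappa(x',x)\bigr] + \bigl[\kappa(x',x)-\kappa(x',x')\bigr],
\end{equation*}
and bound each bracket separately using the first item. The first bracket, fixing the second argument at $x$, is directly controlled by Part~1 with $x_2=x$: this requires $x,x' \in U_x$, which holds because $x \in U_x$ by hypothesis and $x' \in U_x$ by assumption. The resulting bound is $L_\alpha(x)\, d_\inputset{X}(x,x')^\alpha$. For the second bracket, which instead fixes the first argument, I would invoke the Hermitian property to swap the arguments: since $\kappa(x',x)=\overline{\kappa(x,x')}$ and $\kappa(x',x')\in\R$,
\begin{equation*}
|\kappa(x',x)-\kappa(x',x')| \;=\; \bigl|\overline{\kappa(x,x')}-\overline{\kappa(x',x')}\bigr| \;=\; |\kappa(x,x')-\kappa(x',x')|,
\end{equation*}
and the latter is controlled by Part~1 with $x_2=x'$, which needs $x,x'\in U_{x'}$, granted by the symmetric neighbourhood assumption $x\in U_{x'}$ (and $x'\in U_{x'}$). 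This yields $L_\alpha(x')\, d_\inputset{X}(x,x')^\alpha$. Adding the two estimates via the triangle inequality delivers the claim.

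I do not expect a genuine obstacle: both steps are elementary. The only real bookkeeping issue, and the thing one has to be careful about, is threading the neighbourhood conditions: the two-sided membership $x\in U_{x'}$ and $x'\in U_x$ is used precisely to apply Part~1 in both configurations (second argument fixed at $x$, and second argument fixed at $x'$), and this is why Hermiticity is needed in the second bracket rather than an ad hoc inequality in the first-slot variable, which is not provided by the hypothesis.
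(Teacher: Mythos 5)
Your proof is correct and follows essentially the same route as the paper: Part 1 by specialising $x_2'=x_2$ in the hypothesis (which the paper dismisses as trivial), and Part 2 by the same telescoping split $\kappa(x,x)-\kappa(x',x) + \kappa(x',x)-\kappa(x',x')$ with the Hermitian property used to convert the second bracket into a first-argument difference. Your remark about the ambiguous indexing of $L_\alpha(x)$ in the statement is a fair reading and matches the intended interpretation.
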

\begin{proof}
The first claim is trivial.
For the second, let $x\in\inputset{X}$ and $x'\in U_x$ be arbitrary, then we have
\begin{align*}
    |\kappa(x)-\kappa(x')| & = |\kappa(x,x)-\kappa(x',x')| \\
    & \leq |\kappa(x,x)-\kappa(x',x)| + |\kappa(x',x)-\kappa(x',x')| \\
    & = |\kappa(x,x)-\kappa(x',x)| + |\kappa(x,x')-\kappa(x',x')| \\
    & \leq (L_\alpha(x) + L_\alpha(x'))d_\inputset{X}(x,x')^\alpha,
\end{align*}
where we used $|\kappa(x',x)-\kappa(x',x')|=|\overline{\kappa(x,x')}-\overline{\kappa(x',x')}|=|\kappa(x,x')-\kappa(x',x')|$ in the second equality.
\end{proof}
\begin{lemma} \label{properties:lipschitz:separateHoelderAndHermitianImpliesHoelder}
Assume that there exist a constant $\alpha\in\Rp$, some function $L_\alpha: \inputset{X}\rightarrow\Rnn$, and for all $x\in\inputset{X}$ a set $U_x\subseteq\inputset{X}$ with $x\in U_x$, such that for all $x_1,x_1'\in\inputset{X}$ we have
\begin{equation}
    |\kappa(x_1,x)-\kappa(x_1',x)| \leq L_\alpha(x)d_\inputset{X}(x_1,x_1')^\alpha.
\end{equation}
If $\kappa$ is Hermitian, then we have for all $x_1,x_1',x_2,x_2'\in\inputset{X}$ with $x_1,x_1'\in U_{x_2}$ and $x_2,x_2'\in U_{x_1'}$ that
\begin{equation}
    |\kappa(x_1,x_2) - \kappa(x_1',x_2')| \leq L_\alpha(x_2) d_\inputset{X}(x_1,x_1')^\alpha + L_\alpha(x_1') d_\inputset{X}(x_2,x_2')^\alpha.
\end{equation}
\end{lemma}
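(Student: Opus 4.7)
The plan is to bound $|\kappa(x_1,x_2)-\kappa(x_1',x_2')|$ by inserting the intermediate value $\kappa(x_1',x_2)$ and applying the assumed one-variable Hölder bound twice, once in each slot. Concretely, the triangle inequality gives
\[
|\kappa(x_1,x_2)-\kappa(x_1',x_2')| \leq |\kappa(x_1,x_2)-\kappa(x_1',x_2)| + |\kappa(x_1',x_2)-\kappa(x_1',x_2')|,
\]
and the first summand is already of the desired shape: applying the hypothesis with $x=x_2$ (using $x_1,x_1'\in U_{x_2}$) bounds it by $L_\alpha(x_2)\,d_\inputset{X}(x_1,x_1')^\alpha$.

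The only thing needing care is the second summand, since the hypothesis controls perturbations in the first argument, not the second. Here I would use the Hermitian property: $\overline{\kappa(x_1',x_2)-\kappa(x_1',x_2')}=\kappa(x_2,x_1')-\kappa(x_2',x_1')$, so the two quantities have the same modulus. Now the hypothesis applies with $x=x_1'$ (using $x_2,x_2'\in U_{x_1'}$) and yields $|\kappa(x_1',x_2)-\kappa(x_1',x_2')|\leq L_\alpha(x_1')\,d_\inputset{X}(x_2,x_2')^\alpha$. Adding the two estimates completes the argument.

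There is no real obstacle; the only subtle point is recognizing that the $U_x$ neighbourhood conditions in the conclusion exactly match what is needed to legally invoke the separate Hölder hypothesis twice (with base points $x_2$ and $x_1'$ respectively), and that Hermitian symmetry is exactly what converts a second-slot increment into a first-slot increment without picking up extra constants. This mirrors the trick already used in the proof of \Cref{properties:lipschitz:hoelderImpliesSeperateHoelder}, so the presentation can be kept very short.
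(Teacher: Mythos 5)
Your argument is correct and is exactly the paper's proof: insert the intermediate value $\kappa(x_1',x_2)$, bound the first increment directly via the hypothesis at $x_2$, and use Hermitian symmetry to turn the second increment into a first-slot increment handled by the hypothesis at $x_1'$. Nothing further is needed.
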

\begin{proof}
Let $x_1,x_1',x_2,x_2'\in\inputset{X}$ such that $x_1,x_1'\in U_{x_2}$ and $x_2,x_2'\in U_{x_1'}$, then we get
\begin{align*}
    |\kappa(x_1,x_2) - \kappa(x_1',x_2')| & \leq |\kappa(x_1,x_2) - \kappa(x_1',x_2)| + |\kappa(x_1',x_2) -  \kappa(x_1',x_2')| \\
    & =  |\kappa(x_1,x_2) - \kappa(x_1',x_2)| + |\overline{\kappa(x_2,x_1')} - \overline{\kappa(x_2',x_1')}| \\
    & = |\kappa(x_1,x_2) - \kappa(x_1',x_2)| + |\kappa(x_2,x_1') - \kappa(x_2',x_1')| \\
    & \leq L_\alpha(x_2) d_\inputset{X}(x_1,x_1')^\alpha + L_\alpha(x_1') d_\inputset{X}(x_2,x_2')^\alpha.
\end{align*}
\end{proof}
We now consider the special case of Lipschitz continuity, corresponding to $\alpha=1$ in the preceding results.
\begin{definition}
We call $\kappa$ \defm{Lipschitz continuous in the first argument with Lipschitz constant $L\in\Rnn$}, or \defm{$L$-Lipschitz continuous in the first argument}, if for all $x_1,x_1',x_2\in\inputset{X}$ we have
\begin{equation}
    |\kappa(x_1,x_2)-\kappa(x_1',x_2)| \leq L d_\inputset{X}(x_1,x_1').
\end{equation}
Similarly, we define $L$-Lipschitz-continuity in the second argument.
Finally, we call $\kappa$ \defm{separately $L$-Lipschitz continuous} if it is $L$-Lipschitz continuous in the first and the second coordinate.
\end{definition}
\begin{proposition} \label{properties:lipschitz:equivLipContForHermitian}
Let $\kappa$ be Hermitian, then the following statements are equivalent.
\begin{enumerate}
    \item $\kappa$ is $L$-Lipschitz continuous (w.r.t. the product metric on $\inputset{X}\times\inputset{X}$)
    \item $\kappa$ is $L$-Lipschitz continuous in the first argument
    \item $\kappa$ is $L$-Lipschitz continuous in the second argument
    \item $\kappa$ is separately $L$-Lipschitz continuous
\end{enumerate}
\end{proposition}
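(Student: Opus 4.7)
The plan is to establish the equivalences via the cycle (1) $\Rightarrow$ (2) $\Leftrightarrow$ (3), (2) $\wedge$ (3) $\Leftrightarrow$ (4), and (4) $\Rightarrow$ (1), with the Hermitian property supplying the key link between first-argument and second-argument bounds. Throughout I read the product metric on $\inputset{X}\times\inputset{X}$ as the sum metric $d_\inputset{X}(x_1,x_1')+d_\inputset{X}(x_2,x_2')$, consistent with how it was used in the earlier bound for bounded kernels.

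First, (1) $\Rightarrow$ (2) and (1) $\Rightarrow$ (3) are immediate by specialization: fixing $x_2'=x_2$ (respectively $x_1'=x_1$) in the product-metric bound yields exactly the single-argument Lipschitz estimate with the same constant $L$. Next, (2) $\Leftrightarrow$ (3) is the one genuinely non-trivial step, and it is where Hermicity enters. The idea is that complex conjugation preserves absolute values, so for arbitrary $x_1,x_2,x_2'\in\inputset{X}$,
\begin{equation*}
|\kappa(x_1,x_2)-\kappa(x_1,x_2')| = |\overline{\kappa(x_2,x_1)}-\overline{\kappa(x_2',x_1)}| = |\kappa(x_2,x_1)-\kappa(x_2',x_1)|,
\end{equation*}
so an $L$-Lipschitz bound in the first argument, applied to the right-hand side, translates directly into an $L$-Lipschitz bound in the second argument. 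The reverse direction is symmetric. Combining these two implications, any one of (2), (3) implies the other and therefore also implies (4); the converse (4) $\Rightarrow$ (2) and (4) $\Rightarrow$ (3) is trivial.

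Finally, (4) $\Rightarrow$ (1) is an immediate application of \Cref{properties:lipschitz:separateHoelderAndHermitianImpliesHoelder} with $\alpha=1$, constant function $L_\alpha\equiv L$, and $U_x=\inputset{X}$ for every $x$; the conclusion of that lemma is exactly the $L$-Lipschitz bound w.r.t. the sum product metric. Alternatively, one can give a short self-contained triangle-inequality argument: $|\kappa(x_1,x_2)-\kappa(x_1',x_2')|\leq|\kappa(x_1,x_2)-\kappa(x_1',x_2)|+|\kappa(x_1',x_2)-\kappa(x_1',x_2')|$ and bound each summand by its respective single-argument Lipschitz estimate.

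There is no real obstacle here. The only subtlety worth flagging is the choice of product metric (sum versus max), since the max metric would introduce a harmless factor $2$; using the sum convention preserves the constant $L$ and makes the equivalences sharp.
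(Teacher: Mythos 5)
Your proof is correct and follows essentially the same route as the paper: the specialization step for $(1)\Rightarrow(2),(3)$ is the content of \Cref{properties:lipschitz:hoelderImpliesSeperateHoelder}, the conjugation argument for $(2)\Leftrightarrow(3)$ is what the paper dismisses as ``clear'' from Hermicity, and $(4)\Rightarrow(1)$ is obtained exactly as in the paper via \Cref{properties:lipschitz:separateHoelderAndHermitianImpliesHoelder} with $\alpha=1$. Your explicit remark about the sum-versus-max product metric convention is a worthwhile clarification that the paper leaves implicit.
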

\begin{proof}
By definition, if $\kappa$ is separately $L$-Lipschitz continuous, it is $L$-Lipschitz continuous in the first and second argument.
Since $\kappa$ is Hermitian, the equivalence of items 2 and 3 are clear, so any one these two items implies the fourth item.
\Cref{properties:lipschitz:hoelderImpliesSeperateHoelder} shows that item 1 implies item 4.
Finally, \Cref{properties:lipschitz:separateHoelderAndHermitianImpliesHoelder} shows that item 2 implies item 1.
\end{proof}
Since kernels are always Hermitian, \Cref{properties:lipschitz:equivLipContForHermitian} immediately leads to the following result.
\begin{corollary} \label{properties:lipschitz:LipContSepLipContEquivForKernels}
Let $k: \inputset{X}\times\inputset{X}\rightarrow\K$ be a kernel, and $L\in\Rnn$. $k$ is $L$-Lipschitz continuous if and only if it is separately $L$-Lipschitz continuous.
\end{corollary}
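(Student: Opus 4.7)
The plan is very short because the corollary is essentially an instantiation of the preceding proposition. The only thing to verify is the hypothesis of \Cref{properties:lipschitz:equivLipContForHermitian}, namely that $k$ is Hermitian. This is a standard fact recalled in \Cref{sec:prelims}: since $k$ is a kernel, it admits a feature space-feature map pair $(\fs,\fm)$, so for all $x,x'\in\inputset{X}$,
\begin{equation*}
\overline{k(x',x)} = \overline{\langle \fm(x),\fm(x')\rangle_\fs} = \langle \fm(x'),\fm(x)\rangle_\fs = k(x,x'),
\end{equation*}
which is the Hermitian property. (Equivalently, one can argue this directly from positive semidefiniteness.)

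Once Hermiticity is established, I would simply invoke \Cref{properties:lipschitz:equivLipContForHermitian} with $\kappa=k$ and read off the equivalence between items 1 and 4 of that proposition, which is exactly the desired statement. There is no real obstacle here; the corollary is a packaging of the preceding proposition into the specific language of kernels, made worthwhile because the Hermitian property is automatic in the kernel setting and this equivalence will be used repeatedly in the sequel.
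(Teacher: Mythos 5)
Your proposal is correct and matches the paper's own argument: the paper likewise derives the corollary directly from \Cref{properties:lipschitz:equivLipContForHermitian} by noting that kernels are always Hermitian (you merely spell out the feature-map verification of Hermiticity, which the paper takes as a standard fact from \Cref{sec:prelims}). No issues.
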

Why is \Cref{properties:lipschitz:LipContSepLipContEquivForKernels} interesting? Let $\inputset{X}$ be a topological space and $k$ a kernel on $\inputset{X}$. It is well-known that $k$ is continuous if and only if it is separately continuous, i.e., $k(\cdot,x)$ is continuous for all $x\in\inputset{X}$, and $x\mapsto k(x,x)$ is continuous, cf. \cite[Lemma~4.29]{SC08}.
In particular, separate continuity of $k$ is not enough for $k$ to be continuous.
For example, there exists a kernel on $\inputset{X}=[-1,1]$ that is bounded and separately continuous, but not continuous, cf. \cite{lehto1952some}.
\Cref{properties:lipschitz:LipContSepLipContEquivForKernels} asserts that in contrast to continuity, \emph{Lipschitz continuity} is equivalent to separate Lipschitz continuity for kernels.

\subsection{RKHS functions of H\"older-continuous kernels} \label{sec:lipschitz:hoelderContMetricSpaceSuffCond}
We now investigate how H\"older continuity of the kernel induces H\"older continuity of RKHS functions.
We start with the following very general result, which covers essentially all potentially relevant forms of Lipschitz and H\"older continuity. It is a generalization of \cite[Proposition~5.2]{ferreira2013positive}.
\begin{theorem} \label{properties:lipschitz:hoelderContinuousKernelsAndRKHSfuncs}
Let $(\inputset{X},d_\inputset{X})$ be a metric space and $k:\inputset{X}\times\inputset{X}\rightarrow\K$ a kernel. Let $\alpha\in\Rp$ and assume that there exist a function $L_\alpha:\inputset{X}\rightarrow\Rnn$ and for each $x\in\inputset{X}$ a set $U_x\subseteq\inputset{X}$ with $x\in U_x$, such that for all $x_1,x_1'\in U_x$ we have
\begin{equation}
    |k(x_1,x)-k(x_1',x)| \leq L_\alpha(x)d_\inputset{X}(x_1,x_1')^\alpha.
\end{equation}
\begin{enumerate}
    \item Let $(\fs,\fm)$ be an arbitrary feature space-feature map-pair for $k$. For all $x,x'\in\inputset{X}$ with $x' \in U_x$ we have
    \begin{equation}
        \|\Phi(x)-\Phi(x')\|_\fs \leq \sqrt{2L_\alpha(x)}d_\inputset{X}(x,x')^{\frac{\alpha}{2}}.
    \end{equation}
    \item For all $f\in H_k$ and $x,x'\in\inputset{X}$ with $x' \in U_x$ we have
    \begin{equation}
        |f(x)-f(x')| \leq \sqrt{2 L_\alpha(x)}\|f\|_k d_\inputset{X}(x,x')^{\frac{\alpha}{2}}.
    \end{equation}
\end{enumerate}
\end{theorem}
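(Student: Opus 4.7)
The plan is to prove the first assertion and then derive the second as a one-line corollary via the reproducing property and Cauchy--Schwarz. By the feature-map independence lemma already established in this section ($d_\fm = d_k$ for every feature space-feature map pair $(\fs, \fm)$), it is enough to bound $\|\Phi_k(x) - \Phi_k(x')\|_k$ with the canonical feature map $\Phi_k(x) = k(\cdot, x)$, after which the same bound holds for an arbitrary pair $(\fs, \fm)$.

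Using the reproducing property $\langle k(\cdot, y), k(\cdot, x)\rangle_k = k(x, y)$, I first expand
\[
\|\Phi_k(x) - \Phi_k(x')\|_k^2 = k(x,x) - k(x,x') - k(x',x) + k(x',x').
\]
The hypothesis instantiated at the second argument $x$ with $x_1 = x$ and $x_1' = x'$ (both of which lie in $U_x$, since $x \in U_x$ by assumption and $x' \in U_x$ by the restriction in the conclusion) delivers $|k(x,x) - k(x',x)| \leq L_\alpha(x) d_\inputset{X}(x,x')^\alpha$. Since $k$ is Hermitian and $k(x,x)\in\R$, complex conjugation converts this into the matching bound $|k(x,x) - k(x,x')| \leq L_\alpha(x) d_\inputset{X}(x,x')^\alpha$.

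Rearranging the expansion as $(k(x,x) - k(x,x')) + (k(x',x') - k(x',x))$ and applying the two H\"older estimates to both summands yields $\|\Phi_k(x) - \Phi_k(x')\|_k^2 \leq 2 L_\alpha(x) d_\inputset{X}(x,x')^\alpha$, and taking square roots gives the claimed bound with H\"older exponent $\alpha/2$. For the second assertion, the reproducing property gives $f(x) - f(x') = \langle f, k(\cdot,x) - k(\cdot,x')\rangle_k$, so Cauchy--Schwarz combined with the first assertion immediately produces $|f(x) - f(x')| \leq \|f\|_k \cdot \|\Phi_k(x) - \Phi_k(x')\|_k \leq \sqrt{2L_\alpha(x)}\|f\|_k\, d_\inputset{X}(x,x')^{\alpha/2}$.

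I expect the main obstacle to be the second summand $|k(x',x') - k(x',x)|$: relative to $x'$, the varying coordinate sits in the second slot, while the hypothesis only controls variations in the first slot when the second coordinate is anchored at the neighbourhood's centre $x$. Hermitian conjugation rewrites this term as $|k(x',x') - k(x,x')|$, which still has $x'$ in the locked slot, so one cannot simply invoke the hypothesis at $x'$ either (this would require $x \in U_{x'}$, which is not assumed). The key will therefore be to exploit the combined algebraic structure of the two summands---rather than estimating each in isolation---so that the single H\"older control at $x$, together with Hermitian symmetry, is enough to absorb both contributions into the $2L_\alpha(x)d_\inputset{X}(x,x')^\alpha$ bound.
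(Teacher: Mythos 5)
Your proposal follows the same route as the paper's own proof (reduce to the kernel metric, expand $d_k(x,x')^2$, split it into two differences, and bound each by the H\"older hypothesis), but it does not close. The two estimates you actually derive, $|k(x,x)-k(x',x)|\leq L_\alpha(x)d_\inputset{X}(x,x')^\alpha$ and its conjugate $|k(x,x)-k(x,x')|\leq L_\alpha(x)d_\inputset{X}(x,x')^\alpha$, both control the variation of $k$ around the anchor $x$; neither touches the second summand $k(x',x')-k(x',x)$ of your decomposition, because that term involves $k(x',x')$, a value the hypothesis at $x$ never sees. Your final paragraph correctly identifies this as the obstacle, but then only announces that ``the combined algebraic structure'' of the two summands will absorb it --- that is precisely the content of the theorem and is left unproven. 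Worse, no rearrangement can supply it, since the stated hypothesis does not imply the claimed bound: take $\inputset{X}=\{a,b\}$ with $d_\inputset{X}(a,b)=1$, $k(a,a)=k(a,b)=k(b,a)=0$, $k(b,b)=1$, $U_a=\{a,b\}$, $U_b=\{b\}$ and $L_\alpha\equiv 0$. The hypothesis holds at every point, yet $d_k(a,b)=1>0=\sqrt{2L_\alpha(a)}\,d_\inputset{X}(a,b)^{\alpha/2}$. What the second summand genuinely requires is the hypothesis applied at the anchor $x'$, i.e.\ the extra assumption $x\in U_{x'}$, which gives $|k(x,x')-k(x',x')|\leq L_\alpha(x')d_\inputset{X}(x,x')^\alpha$ and hence the constant $\sqrt{L_\alpha(x)+L_\alpha(x')}$ rather than $\sqrt{2L_\alpha(x)}$.

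For context: the paper's proof makes the identical leap, bounding $|k(x,x')-k(x',x')|$ by $L_\alpha(x)d_\inputset{X}(x,x')^\alpha$ ``using that $x'\in U_x$'', which the hypothesis does not license. The statement is safe in the cases the paper actually invokes afterwards ($U_x=\inputset{X}$, or $U_x=\ball_\delta(x)$ with constant $L_\alpha$), because there membership is symmetric ($x'\in U_x$ iff $x\in U_{x'}$) and $L_\alpha(x)=L_\alpha(x')$, so the two constants coincide. The honest completion of your argument is therefore to add $x\in U_{x'}$ (or symmetry of the sets $U$) to the hypotheses and track the constant $L_\alpha(x)+L_\alpha(x')$; with that amendment the remainder of your write-up, including the reduction to the canonical feature map via $d_\fm=d_k$ and the Cauchy--Schwarz step for the second assertion, is correct and matches the paper.
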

\begin{proof}
Let $x,x'\in\inputset{X}$ with $x'\in U_x$ be arbitrary.
If $(\fs,\fm)$ is a feature space-feature map-pair for $k$, then we get
\begin{align*}
    \|\Phi(x)-\Phi(x')\|_\fs & = d_\Phi(x,x') = d_k(x,x') \\
    & = \sqrt{k(x,x)+k(x',x')-k(x,x')-k(x',x)} \\
    & \leq  \sqrt{|k(x,x)-k(x',x)| + |k(x,x')-k(x',x')|} \\
    & \leq \sqrt{2L_\alpha(x) d_\inputset{X}(x,x')^\alpha},
\end{align*}
where we used in the last inequality that $x'\in U_x$.

Let now $f\in H_k$, then we have
\begin{align*}
    |f(x)-f(x')| & \leq \|f\|_k \|k(\cdot,x)-k(\cdot,x')\|_k \\
    & \leq \sqrt{2L_\alpha(x)} \|f\|_kd_\inputset{X}(x,x')^{\frac{\alpha}{2}},
\end{align*}
where we used that $(H_k,\Phi_k)$ is a feature space-feature map-pair for $k$.
\end{proof}
For convenience, we record the following special case.
\begin{corollary}
Let $(\inputset{X},d_\inputset{X})$ be a metric space and $k:\inputset{X}\times\inputset{X}\rightarrow\K$ a kernel that is separately $L$-Lipschitz continuous, then for every $f\in H_k$ and $x,x'\in\inputset{X}$ we have
\begin{equation}
    |f(x)-f(x')| \leq \sqrt{2L} \sqrt{d_\inputset{X}(x,x')}.
\end{equation}
\end{corollary}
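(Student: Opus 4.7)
The plan is to obtain this statement as an immediate specialization of \Cref{properties:lipschitz:hoelderContinuousKernelsAndRKHSfuncs} with the H\"older exponent set to $\alpha = 1$, the neighborhood system taken to be $U_x = \inputset{X}$ for every $x \in \inputset{X}$, and the local H\"older function taken to be the constant $L_\alpha \equiv L$. Under these choices, the hypothesis of the theorem reads exactly
\begin{equation*}
    |k(x_1, x) - k(x_1', x)| \leq L\, d_\inputset{X}(x_1, x_1')^1
\end{equation*}
for all $x_1, x_1', x \in \inputset{X}$, which is precisely Lipschitz continuity of $k$ in the first argument with constant $L$.

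To see that this hypothesis is supplied by the corollary's assumption, I would invoke \Cref{properties:lipschitz:LipContSepLipContEquivForKernels}: since $k$ is a kernel it is Hermitian, and for Hermitian bivariate functions separate $L$-Lipschitz continuity is equivalent to joint $L$-Lipschitz continuity, so in particular it implies $L$-Lipschitz continuity in the first argument on all of $\inputset{X}$. (Alternatively, one could just quote the definition of separate Lipschitz continuity directly, which already states first-argument Lipschitzness explicitly.)

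With the hypothesis verified, the second conclusion of \Cref{properties:lipschitz:hoelderContinuousKernelsAndRKHSfuncs} yields, for every $f \in H_k$ and every $x, x' \in \inputset{X}$ (the condition $x' \in U_x = \inputset{X}$ being vacuous),
\begin{equation*}
    |f(x) - f(x')| \leq \sqrt{2L_\alpha(x)}\, \|f\|_k\, d_\inputset{X}(x, x')^{\alpha/2} = \sqrt{2L}\, \|f\|_k\, \sqrt{d_\inputset{X}(x, x')},
\end{equation*}
which is the stated bound. (The corollary as displayed appears to be missing the factor $\|f\|_k$ on the right-hand side; the clean statement that falls out of the theorem carries this norm, and one recovers the form printed by restricting to the unit ball of $H_k$.)

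There is no substantive obstacle here: the content has already been done in the theorem, and the corollary amounts to writing $\alpha = 1$ and noting that a global Lipschitz constant makes $L_\alpha$ constant and the local neighborhoods $U_x$ redundant. The only minor item worth stating explicitly in the write-up is the passage from "separately $L$-Lipschitz continuous" to "Lipschitz continuous in the first argument" via \Cref{properties:lipschitz:LipContSepLipContEquivForKernels} (or directly by definition), so that the hypothesis of the theorem is literally matched.
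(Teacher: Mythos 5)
Your proposal is correct and matches the paper's intent exactly: the corollary is stated as an immediate specialization of \Cref{properties:lipschitz:hoelderContinuousKernelsAndRKHSfuncs} with $\alpha=1$, $U_x=\inputset{X}$, and $L_\alpha\equiv L$, and the paper offers no further argument. Your observation that the displayed bound is missing the factor $\|f\|_k$ is also well taken; the theorem indeed yields $\sqrt{2L}\,\|f\|_k\,\sqrt{d_\inputset{X}(x,x')}$.
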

\begin{remark}
Consider the situation of \Cref{properties:lipschitz:hoelderContinuousKernelsAndRKHSfuncs}.
\begin{enumerate}
\item If $\alpha\in(0,1)$, $\delta\in\Rp$, $U_x=\ball_\delta(x)$ and $L_\alpha \equiv L_k$ for some $L_k\in\Rnn$, then we recover \cite[Proposition~5.2]{ferreira2013positive}. 
\item If $\alpha\in(0,1)$, $U_x=\inputset{X}$ for all $x\in\inputset{X}$, $L_\alpha \equiv L_k$ for some $L_k\in\Rnn$, then we get that for $f\in H_k$ and $x,x'\in \inputset{X}$ that
\begin{equation*}
    |f(x)-f(x')| \leq \sqrt{2 L_k}\|f\|_k d_\inputset{X}(x,x')^{\frac{\alpha}{2}}
\end{equation*}
We can describe this as "A separately $\alpha$-H\"older continuous kernel leads to RKHS functions that are $\alpha/2$-H\"older continuous".
\end{enumerate}
\end{remark}

\subsection{Converse results}
In \Cref{sec:lipschitz:kernelMetric} we saw that every RKHS function $f\in H_k$ is Lipschitz continuous w.r.t. $d_k$ with Lipschitz constant $\|f\|_k$.
Furthermore, in \Cref{sec:lipschitz:hoelderContMetricSpace} results were presented that ensure that RKHS functions are H\"older continuous w.r.t. a given metric on the input set, if the kernel fulfills a certain continuity condition.
But what about the converse? Assume we have a Hilbert function space $H$ such that all $f\in H$ are Lipschitz continuous (or H\"older continous) w.r.t. a given metric and Lipschitz (or H\"older) constant $\|f\|_H$. What can we say about $H$? And if $H$ is an RKHS, what can we say about the kernel?
To the best of our knowledge, these questions have not been addressed so far.

In this subsection, let $(\inputset{X},d_\inputset{X})$ be a metric space and $H \subseteq \K^\inputset{X}$ a Hilbert space of functions.
\begin{assumption} \label{assump:lipschitz:converse}
There exists $\alpha\in\Rp$ such that all $f\in H$ are $\alpha$-H\"older continuous with H\"older constant $\|f\|_H$.
\end{assumption}
\begin{proposition}
Suppose \Cref{assump:lipschitz:converse} holds, and that $H$ is an RKHS. Furthermore, let $k$ be the uniquely determined kernel with $H_k=H$.
\begin{enumerate}
    \item For all $x\in\inputset{X}$, $k(\cdot,x)\in H$ is $\alpha$-H\"older continuous with H\"older constant $\sqrt{k(x,x)}$.
    If $k$ is bounded, then $k(\cdot,x)$ is  $\alpha$-H\"older continuous with H\"older constant $\|k\|_\infty$, for all $x\in \inputset{X}$.
    \item For all $x_1,x_1',x_2,x_2'\in\inputset{X}$, {\tiny
    \begin{equation}
        |k(x_1,x_2)-k(x_1',x_2')| \leq \min\left\{
            \max\{\sqrt{k(x_2,x_2)}, \sqrt{k(x_1',x_1')}\}, 
            \max\{\sqrt{k(x_1,x_1)}, \sqrt{k(x_2',x_2')}\},
            \right\}(d_\inputset{X}(x_1,x_1')^\alpha+ d_\inputset{X}(x_2,x_2')^\alpha).
    \end{equation} }
    If $k$ is bounded, then 
    \begin{equation}
        |k(x_1,x_2)-k(x_1',x_2')| \leq \|k\|_\infty (d_\inputset{X}(x_1,x_1')^\alpha+ d_\inputset{X}(x_2,x_2')^\alpha)
    \end{equation}
    for all  $x_1,x_1',x_2,x_2'\in\inputset{X}$.
    \item For all $x,x'\in\inputset{X}$,
    \begin{equation}
        d_k(x,x') \leq \sqrt{\sqrt{k(x,x)}+\sqrt{k(x',x')}}d(x, x')^{\frac{\alpha}{2}}.
    \end{equation} 
    If $k$ is bounded, then 
    \begin{equation}
        d_k(x,x') \leq \sqrt{2\|k\|_\infty}d(x, x')^{\frac{\alpha}{2}}.
    \end{equation} 
    \item If $(\fs,\fm)$ is any feature space-feature map-pair, and $k$ is bounded, then $\fm$ is $\frac{\alpha}{2}$-H\"older continuous with H\"older constant $\sqrt{2\|k\|_\infty}$.
\end{enumerate}
\end{proposition}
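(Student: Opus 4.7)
The plan is to dispatch all four items by combining \Cref{assump:lipschitz:converse} with the elementary RKHS identity $\|k(\cdot, x)\|_k = \sqrt{k(x,x)}$, essentially replaying the arguments from \Cref{sec:lipschitz:kernelMetric} but with $d_k$ replaced throughout by $d_\inputset{X}^\alpha$. For Item 1, I would simply observe that $k(\cdot, x) \in H$ and apply the assumption to this particular function, which yields the H\"older constant $\sqrt{k(x,x)}$; the bounded case is an immediate consequence of \Cref{properties:lipschitz:boundednessEquiv}, which identifies $\sup_x \sqrt{k(x,x)}$ with $\|k\|_\infty$.

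For Item 2, I would mirror the triangle-inequality argument in the second proposition of \Cref{sec:lipschitz:kernelMetric}. Inserting the intermediate value $k(x_1', x_2)$ and using the Hermitian identity $|k(x_1', x_2) - k(x_1', x_2')| = |k(x_2, x_1') - k(x_2', x_1')|$, Item 1 bounds the two resulting summands by $\sqrt{k(x_2,x_2)} \, d_\inputset{X}(x_1, x_1')^\alpha$ and $\sqrt{k(x_1', x_1')} \, d_\inputset{X}(x_2, x_2')^\alpha$ respectively; extracting the max of the two prefactors produces one candidate for the min. Repeating the argument with the alternative intermediate point $k(x_1, x_2')$ yields the second candidate, and in the bounded case both reduce to $\|k\|_\infty$.

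Item 3 follows from expanding $d_k(x,x')^2 = k(x,x) + k(x',x') - k(x,x') - k(x',x)$ and regrouping it as $(k(x,x) - k(x',x)) + (k(x',x') - k(x,x'))$. The first summand is bounded by $\sqrt{k(x,x)} \, d_\inputset{X}(x,x')^\alpha$ by applying Item 1 to $k(\cdot, x)$; after rewriting the second via Hermitian conjugation as $|k(x', x') - k(x', x)|$, Item 1 applied to $k(\cdot, x')$ gives $\sqrt{k(x', x')} \, d_\inputset{X}(x,x')^\alpha$. Taking square roots then yields the claim, and $\sqrt{k(x,x)}, \sqrt{k(x',x')} \leq \|k\|_\infty$ handles the bounded case. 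Item 4 is then essentially free: the unlabeled lemma immediately before \Cref{properties:lipschitz:boundednessEquiv} establishes $\|\fm(x) - \fm(x')\|_\fs = d_\fm(x,x') = d_k(x,x')$ for any feature space-feature map-pair, so the bounded case of Item 3 applies verbatim.

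There is no serious obstacle here; the entire proof is organized bookkeeping, and structurally it is the mirror image of the second proposition of \Cref{sec:lipschitz:kernelMetric}. The one subtle point to watch is the Hermitian manipulation in Item 2, where the conjugation must be applied to exactly the correct factor so that the symmetric min-max structure emerges from the two orderings of the triangle inequality.
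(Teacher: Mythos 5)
Your proposal is correct and follows essentially the same route as the paper: Item 1 by applying \Cref{assump:lipschitz:converse} to $k(\cdot,x)$ with $\|k(\cdot,x)\|_k=\sqrt{k(x,x)}$, Item 2 by the triangle-inequality-plus-Hermitian argument (which the paper packages as \Cref{properties:lipschitz:separateHoelderAndHermitianImpliesHoelder}) run for both choices of intermediate point, Item 3 by expanding $d_k(x,x')^2$ and bounding the two separate differences, and Item 4 via the identity $\|\fm(x)-\fm(x')\|_\fs=d_k(x,x')$. No gaps.
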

\begin{proof}
The first claim follows immediately from \Cref{assump:lipschitz:converse} and the fact that $\|k(\cdot,x)\|_k=\sqrt{k(x,x)}$ for all $x\in\inputset{X}$, and the definition of $\|k\|_\infty$.

Let $x_1,x_1',x_2,x_2'\in\inputset{X}$ be arbitrary. Using \Cref{properties:lipschitz:separateHoelderAndHermitianImpliesHoelder} leads to
\begin{align*}
    |k(x_1,x_2)-k(x_1',x_2')| & \leq \sqrt{k(x_2,x_2)}d_\inputset{X}(x_1,x_1') + \sqrt{k(x_1',x_1')}d_\inputset{X}(x_2,x_2') \\
    & \leq \max\left\{\sqrt{k(x_2,x_2)}, \sqrt{k(x_1',x_1')} \right\},
\end{align*}
and repeating this computing with $x_1,x_2'$ instead of $x_2,x_1'$ establishes the second assertion.
Additionally,
\begin{align*}
    d_k(x,x') & = \sqrt{k(x,x)-k(x,x')-k(x',x)+k(x',x')} \\
    & \leq \sqrt{|k(x,x)-k(x',x)| + |k(x,x')-k(x',x')|} \\
    & \leq \sqrt{\sqrt{k(x,x)} + \sqrt{k(x',x')}d_\inputset{X}(x,x')^\alpha},
\end{align*}
showing the third claim.
This also establishes the last assertion, since for any feature space-feature map pair $(\fs,\fm)$ and all $x,x'\in\inputset{X}$ we have $\|\fm(x)-\fm(x')\|_\fs=d_k(x,x')$.
\end{proof}
\begin{corollary}
Assume that all $f\in H$ are Lipschitz continuous with Lipschitz constant $\|f\|_H$, that $H$ is an RKHS, and that the uniquely determined kernel $k$ with $H_k=H$ is bounded. Then $k$ is Lipschitz continuous with Lipschitz constant $\|k\|_\infty$.
\end{corollary}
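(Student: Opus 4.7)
The plan is to specialize the preceding proposition to the Lipschitz case, that is, to take $\alpha=1$ in \Cref{assump:lipschitz:converse}. Since Lipschitz continuity is precisely 1-H\"older continuity, and we assume $H$ is an RKHS with bounded reproducing kernel $k$, all the hypotheses of the proposition are in force with $\alpha=1$. This turns the preceding work into a one-line deduction rather than a fresh argument.

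Concretely, I would carry out the following steps. First, invoke item~2 of the preceding proposition (bounded case) with $\alpha=1$ to obtain, for all $x_1,x_1',x_2,x_2'\in\inputset{X}$,
\begin{equation*}
|k(x_1,x_2)-k(x_1',x_2')| \leq \|k\|_\infty\bigl(d_\inputset{X}(x_1,x_1') + d_\inputset{X}(x_2,x_2')\bigr),
\end{equation*}
which is by definition Lipschitz continuity of $k$ w.r.t.\ the product metric on $\inputset{X}\times\inputset{X}$ with Lipschitz constant $\|k\|_\infty$.

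As an alternative (and arguably cleaner) route, one may first use item~1 of the preceding proposition with $\alpha=1$ to see that each section $k(\cdot,x)$ is Lipschitz with constant $\sqrt{k(x,x)}\leq\|k\|_\infty$ (the inequality coming from \Cref{properties:lipschitz:boundednessEquiv}), so that $k$ is separately $\|k\|_\infty$-Lipschitz continuous, and then appeal to \Cref{properties:lipschitz:LipContSepLipContEquivForKernels} to upgrade separate Lipschitz continuity to joint Lipschitz continuity. Either route is essentially cost-free; there is no genuine obstacle, as the corollary is really a convenience restatement packaging $\alpha=1$ of the proposition. The only thing worth double-checking is the definition of the product metric being used, but since it matches the one fixed in \Cref{properties:lipschitz:equivLipContForHermitian}, both routes give the stated constant without loss.
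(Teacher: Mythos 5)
Your proposal is correct and matches the paper's intent: the corollary is stated without proof there precisely because it is the $\alpha=1$, bounded-kernel specialization of item~2 of the preceding proposition, which is your primary route. Your alternative route via separate Lipschitz continuity and \Cref{properties:lipschitz:LipContSepLipContEquivForKernels} is also valid but adds nothing beyond the direct specialization.
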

The following result provides a simple condition for $H$ to be an RKHS, if $H$ fulfills \Cref{assump:lipschitz:converse}.
\begin{proposition} \label{properties:lipschitz:converseRKHS}
Suppose \Cref{assump:lipschitz:converse} holds, and that there exists $x_0\in\inputset{X}$ such that $f(x_0)=0$ for all $f\in H$.
In this case, $H$ is an RKHS.
Furthermore, $\sqrt{k(x,x)}\leq d_\inputset{X}(x,x_0)$ for all $x\in\inputset{X}$, where $k$ is the uniquely determined reproducing kernel of $H$.
\end{proposition}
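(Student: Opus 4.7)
The plan is to prove this by a direct application of the Riesz representation theorem, exploiting the vanishing condition at $x_0$ to turn the Hölder estimate with constant $\|f\|_H$ into a bound on the evaluation functional. The result is essentially a one-line observation once the right quantities are isolated, so I expect no real obstacle.

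First, I would fix an arbitrary $x\in\inputset{X}$ and any $f\in H$. By \Cref{assump:lipschitz:converse}, $f$ is $\alpha$-Hölder continuous with Hölder constant $\|f\|_H$, so
\begin{equation*}
    |f(x)| = |f(x) - f(x_0)| \leq \|f\|_H \, d_\inputset{X}(x,x_0)^\alpha,
\end{equation*}
where the first equality uses the hypothesis $f(x_0)=0$. This shows that the evaluation functional $\delta_x:H\to\K$, $f\mapsto f(x)$, is bounded with operator norm at most $d_\inputset{X}(x,x_0)^\alpha$. Since $x$ was arbitrary, every point evaluation on $H$ is continuous, which is precisely the definition of an RKHS; in particular $H$ has a unique reproducing kernel $k$.

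Next I would invoke Riesz representation: for each $x\in\inputset{X}$ there is a unique $k(\cdot,x)\in H$ with $f(x)=\langle f, k(\cdot,x)\rangle_H$ for all $f\in H$, and $\|k(\cdot,x)\|_H = \|\delta_x\|_{H^*}$. Combining this with the bound $\|\delta_x\|_{H^*}\leq d_\inputset{X}(x,x_0)^\alpha$ established above, and using the standard identity $\|k(\cdot,x)\|_H^2 = \langle k(\cdot,x), k(\cdot,x)\rangle_H = k(x,x)$, I obtain
\begin{equation*}
    \sqrt{k(x,x)} = \|k(\cdot,x)\|_H \leq d_\inputset{X}(x,x_0)^\alpha,
\end{equation*}
which gives the claimed estimate (matching the form stated in the proposition when $\alpha=1$, and more generally yielding the $\alpha$-th power). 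This finishes the proof.
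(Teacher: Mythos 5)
Your proof is correct and follows essentially the same route as the paper's: bound the evaluation functional $\delta_x$ via $|f(x)|=|f(x)-f(x_0)|\leq\|f\|_H\,d_\inputset{X}(x,x_0)^\alpha$, conclude that $H$ is an RKHS, and identify $\sqrt{k(x,x)}=\|k(\cdot,x)\|_H=\|\delta_x\|$ via Riesz representation. You even track the exponent $\alpha$ correctly, whereas the paper's statement and proof write $d_\inputset{X}(x,x_0)$ without the power $\alpha$, which appears to be an omission there rather than a flaw in your argument.
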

\begin{proof}
Let $x\in\inputset{X}$ and consider the corresponding evaluation functional $\delta_x: H \rightarrow \K$, $\delta_x f = f(x)$.
We then have for all $f\in\inputset{X}$ that
\begin{equation*}
    |\delta_x f| = |f(x)|=|f(x)-f(x_0)| \leq \|f\|_H d_\inputset{X}(x,x_0),
\end{equation*}
which shows that $\delta_x$ is continuous, and $\|\delta_x\|\leq d_\inputset{X}(x,x_0)$.
Therefore, $H$ is an RKHS. Let $k$ be its uniquely determined reproducing kernel, then
\begin{equation*}
    \sqrt{k(x,x)}=\|k(\cdot,x)\|_H = \|\delta_x\| \leq d(x,x_0),
\end{equation*}
since $k(\cdot,x)$ is the uniquely determined Riesz representer of $\delta_x$ in $H$.
\end{proof}
Combining \Cref{properties:lipschitz:converseRKHS} with \Cref{properties:lipschitz:boundednessEquiv} leads to the following result.
\begin{corollary}
Assume that all $f\in H$ are bounded and Lipschitz continuous with Lispchitz constant $\|f\|_H$. Then $H$ is an RKHS with a bounded and Lipschitz continuous kernel $k$ having Lipschitz constant $\|k\|_\infty$.
\end{corollary}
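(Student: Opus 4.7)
The strategy is to verify the three conclusions in turn: that $H$ is an RKHS, that the reproducing kernel $k$ is bounded, and that $k$ is Lipschitz continuous with constant $\|k\|_\infty$. The latter two follow from earlier results once the first is established, so the real content sits in the RKHS claim.

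For that claim I would adapt the proof of \Cref{properties:lipschitz:converseRKHS}. There, the hypothesis $f(x_0)=0$ for all $f\in H$ was used to write $|f(x)|=|f(x)-f(x_0)|\leq \|f\|_H d_\inputset{X}(x,x_0)$, directly yielding continuity of $\delta_x$. Without a common zero, the Lipschitz estimate only gives $|f(x)|\leq |f(x_0)|+\|f\|_H d_\inputset{X}(x,x_0)$, and to close the argument the individual bound $|f(x_0)|\leq \|f\|_\infty<\infty$ coming from boundedness has to be promoted to a uniform estimate $\|f\|_\infty\leq C\|f\|_H$ valid for all $f\in H$. The natural route is the closed graph theorem applied to the inclusion $\inclusion\colon H\hookrightarrow \boundedFuncs(\inputset{X})$: if $f_n\to f$ in $H$ and $f_n\to g$ in sup-norm, applying the Lipschitz bound to the differences $f_n-f_m$ and passing to the limit forces $f-g$ to be a constant function, and one then argues that this constant must be zero. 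Once the uniform bound holds, every evaluation functional $\delta_x$ is continuous with $\|\delta_x\|\leq C+d_\inputset{X}(x,x_0)$, so $H$ is an RKHS.

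With $H=H_k$ an RKHS and every $f\in H_k$ bounded, the equivalence of items~1 and~5 in \Cref{properties:lipschitz:boundednessEquiv} yields that $k$ is bounded. All hypotheses of the preceding corollary are then met---every $f\in H$ is Lipschitz with constant $\|f\|_H$, $H$ is an RKHS, and its kernel is bounded---so $k$ is Lipschitz continuous with Lipschitz constant $\|k\|_\infty$, completing the proof. The main obstacle is the closed-graph step: turning the pointwise-in-$f$ boundedness hypothesis into a uniform control of sup-norms by $H$-norms is the subtle piece on which the whole argument hinges, and the rest is a bookkeeping combination of \Cref{properties:lipschitz:converseRKHS}, \Cref{properties:lipschitz:boundednessEquiv}, and the preceding corollary.
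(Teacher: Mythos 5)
Your overall architecture is the right one and matches what the paper intends: the paper offers no proof beyond the remark that the statement follows by combining \Cref{properties:lipschitz:converseRKHS} with \Cref{properties:lipschitz:boundednessEquiv}, and you correctly note that once $H$ is known to be an RKHS, the equivalence of items~1 and~5 in \Cref{properties:lipschitz:boundednessEquiv} gives boundedness of $k$ and the preceding corollary then gives the Lipschitz bound $\|k\|_\infty$. You also correctly isolate the crux: the Lipschitz hypothesis makes every difference functional $\delta_x-\delta_{x'}$ bounded on $H$ (with norm at most $d_\inputset{X}(x,x')$), so continuity of all evaluation functionals reduces to continuity of a single one, and \Cref{properties:lipschitz:converseRKHS} obtains that single instance from the common zero $f(x_0)=0$, which is not available here.

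The problem is that the closed-graph step on which everything hinges cannot be completed. If $f_n\to f$ in $H$ and $f_n\to g$ uniformly, applying the bounded functionals $\delta_x-\delta_{x'}$ does force $f-g$ to be constant, as you say, but there is no way to conclude that the constant is zero; indeed the graph of $\inclusion\colon H\to\ell^\infty(\inputset{X})$ need not be closed under the stated hypotheses. Concretely, let $\inputset{X}=\N$ with $d_\inputset{X}(m,n)=\sqrt{2}$ for $m\neq n$, let $\Lambda$ be a discontinuous linear functional on $\ell^2$, and let $H=\{Ta : a\in\ell^2\}$ where $(Ta)(n)=\Lambda(a)+a_n$ and $\|Ta\|_H:=\|a\|_{\ell^2}$ (the map $T$ is injective, so this is a Hilbert space of functions). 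Every $Ta$ is bounded, and $|(Ta)(m)-(Ta)(n)|=|a_m-a_n|\leq\sqrt{2}\,\|a\|_{\ell^2}=\|Ta\|_H\, d_\inputset{X}(m,n)$, so all hypotheses of the corollary hold; yet $\delta_1(Ta)=\Lambda(a)+a_1$ is unbounded, so $H$ is not an RKHS. Choosing $a^{(j)}\to 0$ in $\ell^2$ with $\Lambda(a^{(j)})=1$ gives $Ta^{(j)}\to 0$ in $H$ while $Ta^{(j)}\to \mathbf{1}$ uniformly, which is precisely the failure of your ``constant must be zero'' step. So the gap is not a missing technicality but a missing hypothesis: one needs something that pins down one evaluation functional, e.g.\ the common zero of \Cref{properties:lipschitz:converseRKHS} or an a priori bound $\|f\|_\infty\leq C\|f\|_H$. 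Note that the paper's own one-line justification suffers from the same defect, since it invokes \Cref{properties:lipschitz:converseRKHS} without its hypothesis $f(x_0)=0$; your attempt has in effect exposed a flaw in the statement rather than failed to prove a true one.
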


In RKHSs, \Cref{assump:lipschitz:converse} can be relaxed.
\begin{lemma}
Let $k:\inputset{X}\times\inputset{X}\rightarrow\K$ be a kernel and $H_k$ its RKHS. Let $D\subseteq H_k$ be dense, and assume that there exists $\alpha\in\Rp$ such that all $f\in D$ are $\alpha$-H\"older continuous w.r.t. $d_\inputset{X}$ with H\"older bound $\|f\|_k$. Then all $f\in H_k$ are $\alpha$-H\"older continuous with H\"older bound $\|f\|_k$.
\end{lemma}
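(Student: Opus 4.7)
The plan is a standard density and continuity argument, exploiting the fact that norm convergence in an RKHS implies pointwise convergence. Fix an arbitrary $f \in H_k$ and two points $x,x' \in \inputset{X}$. Since $D$ is dense in $H_k$, pick a sequence $(f_n)_{n\in\Np} \subseteq D$ with $\|f_n - f\|_k \to 0$.

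The first key observation is that this implies pointwise convergence $f_n(x) \to f(x)$ and $f_n(x') \to f(x')$. This is immediate from the reproducing property together with the Cauchy--Schwarz inequality: for any $y \in \inputset{X}$,
\begin{equation*}
    |f_n(y) - f(y)| = |\langle f_n - f, k(\cdot,y)\rangle_k| \leq \|f_n - f\|_k \sqrt{k(y,y)},
\end{equation*}
and the right-hand side vanishes as $n \to \infty$. The second key observation is that $\|f_n\|_k \to \|f\|_k$ by continuity of the Hilbert space norm.

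Now I apply the assumed H\"older bound to each element of the approximating sequence: for every $n \in \Np$,
\begin{equation*}
    |f_n(x) - f_n(x')| \leq \|f_n\|_k d_\inputset{X}(x,x')^\alpha.
\end{equation*}
Passing to the limit $n \to \infty$, the left-hand side converges to $|f(x) - f(x')|$ by the pointwise convergence just established, while the right-hand side converges to $\|f\|_k d_\inputset{X}(x,x')^\alpha$ by continuity of the norm. This yields the desired inequality for $f$, and since $x,x' \in \inputset{X}$ and $f \in H_k$ were arbitrary, all functions in $H_k$ are $\alpha$-H\"older continuous with H\"older bound $\|f\|_k$.

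There is essentially no obstacle here: the proof is a textbook limit argument, and the only substantive input is the automatic upgrade from norm convergence to pointwise convergence in an RKHS, which already appeared (implicitly) in \Cref{properties:lipschitz:boundednessEquiv} and its proof.
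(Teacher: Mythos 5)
Your proof is correct and follows essentially the same route as the paper: both arguments approximate $f$ by a sequence in $D$, use the fact that norm convergence in $H_k$ implies pointwise convergence (via the reproducing property and Cauchy--Schwarz), apply the H\"older bound to each $f_n$, and pass to the limit using continuity of the norm. The only cosmetic difference is that the paper packages the pointwise convergence as continuity of the inner product against $k(\cdot,x)-k(\cdot,x')$ in a single chain of (in)equalities, whereas you separate the two limits explicitly; the substance is identical.
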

\begin{proof}
Let $f\in H_k$ and $x,x'\in \inputset{X}$ be arbitrary. Since $D$ is dense in $H_k$, there exists $(f_n)_{n\in\Np}\subseteq D$ such that $f_n \rightarrow f$ (in $H_k$). We then have
\begin{align*}
    |f(x)-f(x')| & = |\langle f, k(\cdot,x) - k(\cdot,x')\rangle_k| \\
    & = |\langle \lim_{n\rightarrow \infty} f_n, k(\cdot,x) - k(\cdot,x')\rangle_k| \\
    & =  \lim_{n\rightarrow \infty} |\langle  f_n, k(\cdot,x) - k(\cdot,x')\rangle_k| \\
    & = \lim_{n\rightarrow \infty} |f_n(x) - f_n(x')| \\
    & \leq \lim_{n\rightarrow \infty} \|f_n\|_k d(x,x')^\alpha \\
    & = \|f\|_k d(x,x')^\alpha.
\end{align*}
\end{proof}
Finally, under an additional assumption on $d_\inputset{X}$, \Cref{assump:lipschitz:converse} implies the existence of an RKHS on $H$.
The construction is classical, cf. \cite[Chapter~I]{atteia1992hilbertian}, but has not been used in this context before.

Suppose that  \Cref{assump:lipschitz:converse} holds and that $d_\inputset{X}$ is a \emph{Hilbertian metric}, i.e., there exists a $\K$-Hilbert space $\fs$ and a map $\fm:\inputset{X}\rightarrow\fs$, such that $d_\inputset{X}(x,x')=\|\fm(x)-\fm(x')\|_\fs$.

Define $\fs_0 = \{ \fm(x) \mid x \in \inputset{X}\}\subseteq\fs$, and for $f\in H$ set $\ell_f: \fs_0 \rightarrow \K$ by $\ell_f(\Phi(x))=f(x)$.
\begin{lemma}
    For all $f\in H$, $\ell_f$ as above is a well-defined, linear and continuous map.
\end{lemma}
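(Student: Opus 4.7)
The plan is to verify the three asserted properties of $\ell_f$ in turn: well-definedness, linearity (interpreted as the consistency of the natural linear extension of $\ell_f$ from $\fs_0$ to $\mathrm{span}(\fs_0)$), and continuity.

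First, I would establish well-definedness. The map $\ell_f$ is a priori parameterised by the choice of $x \in \inputset{X}$ with $\fm(x) = v$ for a given $v \in \fs_0$, so the task is to show this choice is immaterial. The key fact is that $\fm$ is injective: if $\fm(x) = \fm(x')$, then $d_\inputset{X}(x,x') = \|\fm(x) - \fm(x')\|_\fs = 0$, and because $d_\inputset{X}$ is a genuine metric (not merely a semimetric), this forces $x = x'$. Consequently $f(x) = f(x')$, and $\ell_f$ is unambiguously defined on $\fs_0$.

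Next, I would prove continuity, which follows directly from \Cref{assump:lipschitz:converse}. For any $v = \fm(x)$, $v' = \fm(x')$ in $\fs_0$, the Hölder bound of $f$ combined with the Hilbertian structure of $d_\inputset{X}$ yields
\begin{equation*}
|\ell_f(v) - \ell_f(v')| = |f(x) - f(x')| \leq \|f\|_H d_\inputset{X}(x,x')^\alpha = \|f\|_H \|v - v'\|_\fs^\alpha.
\end{equation*}
Hence $\ell_f$ is $\alpha$-H\"older continuous on $\fs_0$ with H\"older constant $\|f\|_H$, and in particular continuous w.r.t.\ the subspace topology inherited from $\fs$. In the Lipschitz case $\alpha = 1$, this bound is precisely the estimate required to extend $\ell_f$ to a bounded linear functional on $\overline{\mathrm{span}(\fs_0)}$ in the subsequent development.

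The main obstacle is linearity. The natural reading is that $\ell_f$ extends uniquely to a linear map $\tilde{\ell}_f: \mathrm{span}(\fs_0) \to \K$ via $\tilde{\ell}_f(\sum_i \alpha_i \fm(x_i)) = \sum_i \alpha_i f(x_i)$, and the content is that this prescription is consistent, i.e.\ that $\sum_i \alpha_i \fm(x_i) = 0$ in $\fs$ implies $\sum_i \alpha_i f(x_i) = 0$ in $\K$. To see this, I would argue via a bound of the form $|\sum_i \alpha_i f(x_i)| \leq \|f\|_H \|\sum_i \alpha_i \fm(x_i)\|_\fs$ in the Lipschitz case, so that vanishing of the right-hand side forces vanishing of the left. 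Such a bound, which upgrades the pairwise H\"older estimate to an estimate on linear combinations, is the crux of the proof and should exploit both the Hilbertian structure of $\fs$ (and hence the parallelogram identity satisfied by $d_\inputset{X}^2$) and the Hilbert space structure on $H$. This is indeed the classical mechanism behind the construction in \cite{atteia1992hilbertian} referenced above, and once established it delivers a well-defined, linear, and (by the preceding paragraph) bounded map on $\mathrm{span}(\fs_0)$ whose restriction to $\fs_0$ is $\ell_f$.
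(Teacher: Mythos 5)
Your treatment of well-definedness and continuity is correct and essentially the paper's argument. For well-definedness you observe that $\fm$ is injective because $d_\inputset{X}$ is a genuine metric; the paper instead runs the H\"older bound $|f(x)-f(x')|\leq\|f\|_H d_\inputset{X}(x,x')^\alpha=\|f\|_H\|\fm(x)-\fm(x')\|_\fs^\alpha$ to conclude $f(x)=f(x')$ whenever $\fm(x)=\fm(x')$ (an argument that would survive even if $d_\inputset{X}$ were only a semimetric). Either route is fine, and the same bound gives the $\alpha$-H\"older continuity of $\ell_f$ on $\fs_0$, exactly as you state.

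The problem is your third paragraph. At the level of the lemma itself there is nothing to prove for ``linearity'': $\fs_0$ is merely the image of $\fm$, not a subspace, and the paper accordingly dismisses the point as clear. What you are actually trying to establish --- that the prescription $\sum_i\alpha_i\fm(x_i)\mapsto\sum_i\alpha_i f(x_i)$ is consistent on $\vspan\,\fs_0$ --- is the content of the paragraph \emph{after} the lemma, and your proposed route to it cannot be completed: no bound of the form $|\sum_i\alpha_i f(x_i)|\leq C\,\|\sum_i\alpha_i\fm(x_i)\|_\fs$ follows from \Cref{assump:lipschitz:converse} alone. Concretely, take $\inputset{X}=\{0,1,2\}\subseteq\R$ with $d_\inputset{X}(x,x')=|x-x'|$, realized as a Hilbertian metric by $\fs=\R$ and $\fm=\identity$; let $f$ be the tent function $f(0)=f(2)=0$, $f(1)=1$, and $H=\vspan\{f\}$ with $\|cf\|_H=|c|$, which satisfies \Cref{assump:lipschitz:converse} with $\alpha=1$. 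Then $\fm(0)-2\fm(1)+\fm(2)=0$ while $f(0)-2f(1)+f(2)=-2\neq 0$. So the step you flag as ``the crux'' is not merely missing --- the estimate you want is false, and the pairwise H\"older bound does not upgrade to linear combinations. (This also puts pressure on the paper's own subsequent assertion that $\tilde{\ell_f}$ is well-defined, but that assertion lies outside the lemma you were asked to prove.)
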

\begin{proof}
Let $f\in H$ be arbitrary. In order to show that $\ell_f$ is well-defined, let $x,x'\inputset{X}$ such that $\fm(x)=\fm(x')$. We then have
\begin{equation*}
    |\ell_f(\Phi(x)) - \ell_f(\Phi(x'))| = |f(x)-f(x')| \leq \|f\|_H d_\inputset{X}(x,x')^\alpha 
    = \|f\|_H \|\Phi(x)-\Phi(x')\|_\fs^\alpha =0,
\end{equation*}
so $\ell_f(\Phi(x))=\ell_f(\Phi(x'))$, and $\ell_f$ is indeed well-defined.
Linearity and continuity are now clear.
\end{proof}
Given $f\in H$, we can now extend $\ell_f$ linearly to $\tilde{\ell_f}: \vspan \fs_0 \rightarrow \K$, and the resulting map is still well-defined, linear and continuous.
Define now $\fs_\inputset{X}=\overline{\vspan \fs_0}^{\|\cdot\|_\fs}$, then by construction $\fs_0$ is dense in $\fs_\inputset{X}$.
This means that for all $f\in H$, there exists a unique linear and continuous extension $\overline{\ell_f}: \fs_\inputset{X}\rightarrow\K$ of $\tilde{\ell_f}$. Note that this means that for all $f\in H$, $\overline{\ell_f}\in \fs_\inputset{X}'$ (the topological dual of $\fs_\inputset{X}$).
Since $\fs_\inputset{X}$ is itself a Hilbert space (because it is a closed subset of a Hilbert space), for each $f\in H$, there exists a unique Riesz representer $R(\ell_f)\in \fs_\inputset{X}$.
Define for all $f_1,f_2\in H$
\begin{equation}
    k(f_1,f_2)=\langle R(\ell_{f_2}), R(\ell_{f_1})\rangle_{\fs_\inputset{X}},
\end{equation}
then $k$ is a kernel on $H$ with feature space $\fs_\inputset{X}$ and feature map $H\ni f \mapsto R(\ell_f) \in \fs_\inputset{X}$.
The corresponding RKHS of $k$ is given by
\begin{equation}
    H_k = \{ f \mapsto \ell_f h \mid h \in \fs_\inputset{X} \},
\end{equation}
cf. \cite[Theorem~6.21]{SC08}.

\section{Lipschitz and H\"older continuity inducing kernels} \label{sec:lipschitz:inducing}
Essentially, the results in \Cref{sec:lipschitz:hoelderContMetricSpace} ensure that RKHS functions of $\alpha$-H\"older continuous kernels are $\alpha/2$-H\"older continuous. In particular, these results do not guarantee that RKHS functions of Lipschitz continuous kernels are themselves Lipschitz continuous.
However, for many applications the regularity properties (here Lipschitz and H\"older continuity) of RKHS functions matter most, and a kernel should be chosen that enforces the desired regularity properties for the induced RKHS functions.
This motivates the investigation of kernels that \emph{induce} prescribed H\"older continuity of its RKHS functions.
\subsection{Series expansions}
We start by characterizing \emph{all kernels} on a given metric space that have RKHS functions with prescribed H\"older continuity. To the best of our knowledge, this result is new.
\begin{theorem} \label{properties:lipschitz:characterizationHoelderInducingViaSeries}
Let $(\inputset{X},d_\inputset{X})$ be a metric space, $k$ a kernel on $\inputset{X}$, and $\alpha\in\Rp$. The following statements are equivalent.
\begin{enumerate}
    \item There exists $C\in\Rp$ such that all $f\in H_k$ are $\alpha$-H\"older continuous with H\"older constant $C\|f\|_k$.
    \item There exists a Parseval frame $(f_i)_{i\in I}$ in $H_k$, such that for all $i\in I$, $f_i$ is $\alpha$-H\"older continuous with H\"older constant $L_i\in\Rnn$, and $\sup_{i\in I} L_i < \infty$.
    \item There exists a family of functions $(f_i)_{i\in I}$, $f_i: \inputset{X}\rightarrow \K$, such that for all $i\in I$, $f_i$ is $\alpha$-H\"older continuous with H\"older constant $L_i\in\Rnn$, and $\sup_{i\in I} L_i < \infty$, and for all $x,x'\in \inputset{X}$
    \begin{equation}
        k(x,x') = \sum_{i\in I} f_i(x)\overline{f_i(x')},
    \end{equation}
    where the convergence is pointwise.
\end{enumerate}
\end{theorem}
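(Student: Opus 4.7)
I would argue the three implications in the cyclic order $(1) \Rightarrow (2) \Rightarrow (3) \Rightarrow (1)$, with \Cref{thm:papadakis} serving as the main bridge between Parseval frames and pointwise kernel expansions.

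For $(1) \Rightarrow (2)$, I would take any orthonormal basis $(e_i)_{i \in I}$ of $H_k$. Such a basis is (trivially) a Parseval frame, and since each $e_i$ satisfies $\|e_i\|_k = 1$, hypothesis (1) gives that each $e_i$ is $\alpha$-H\"older continuous with H\"older constant $L_i \leq C \cdot 1 = C$. In particular $\sup_{i \in I} L_i \leq C < \infty$, establishing (2).

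For $(2) \Rightarrow (3)$, it suffices to apply the first part of \Cref{thm:papadakis} to the Parseval frame $(f_i)_{i \in I}$, which yields the pointwise expansion $k(x,x') = \sum_{i \in I} f_i(x)\overline{f_i(x')}$. The individual H\"older constants $L_i$ (and their finite supremum) are carried over verbatim to statement (3).

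For $(3) \Rightarrow (1)$, the second part of \Cref{thm:papadakis} certifies that $(f_i)_{i \in I}$ is a Parseval frame in $H_k$. Given $f \in H_k$, the canonical expansion $f = \sum_i \langle f, f_i\rangle_k f_i$ converges in $H_k$, and hence pointwise by continuity of evaluation in an RKHS, while Parseval's identity gives $\sum_i |\langle f, f_i\rangle_k|^2 = \|f\|_k^2$. The Cauchy--Schwarz inequality in $\ell^2(I)$ then yields
\[
    |f(x) - f(x')| = \Bigl| \sum_i \langle f, f_i\rangle_k (f_i(x) - f_i(x')) \Bigr| \leq \|f\|_k \sqrt{\sum_i |f_i(x) - f_i(x')|^2} = \|f\|_k\, d_k(x,x'),
\]
where the last equality uses the identity $\sum_i |f_i(x) - f_i(x')|^2 = d_k(x,x')^2$, obtained by expanding the square and applying the Papadakis formula to each of the four kernel values $k(x,x)$, $k(x,x')$, $k(x',x)$, and $k(x',x')$. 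To complete the implication it would remain to exhibit a constant $C > 0$, depending only on $L := \sup_i L_i$, such that $d_k(x,x') \leq C\, d_\inputset{X}(x,x')^\alpha$ uniformly in $x, x' \in \inputset{X}$.

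I expect this last bound on the kernel metric to be the main obstacle: termwise one only has $|f_i(x) - f_i(x')|^2 \leq L^2 d_\inputset{X}(x,x')^{2\alpha}$, which is not directly summable when $|I|$ is infinite. The argument will have to exploit the uniform H\"older bound on the frame elements \emph{together with} the pointwise energy identity $\sum_i |f_i(y)|^2 = k(y,y)$ built into every Parseval frame --- for example by a dichotomy splitting of the index set according to the size of $|f_i(x)|$ and $|f_i(x')|$, or through an interpolation-type estimate that trades the termwise $\ell^\infty$ H\"older bound against the $\ell^2$ summability inherited from the frame structure.
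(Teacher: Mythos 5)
Your implications $1\Rightarrow 2$ (via an orthonormal basis) and $2\Leftrightarrow 3$ (via \Cref{thm:papadakis}) coincide with the paper's argument. The gap sits in $3\Rightarrow 1$, and you have located it exactly: after your (correct) reduction to $|f(x)-f(x')|\le\|f\|_k\,d_k(x,x')$ with $d_k(x,x')^2=\sum_i|f_i(x)-f_i(x')|^2$, everything hinges on the bound $d_k(x,x')\le C\,d_\inputset{X}(x,x')^\alpha$ --- and no dichotomy or interpolation argument will produce it, because the implication is false under the stated hypotheses once $I$ is infinite. Take $\inputset{X}=[0,1]$ with the Euclidean metric, $\alpha=1$, and $f_i(x)=i^{-1}\sin(ix)$ for $i\in\N$: each $f_i$ is $1$-Lipschitz and $\sum_i f_i(x)f_i(x')$ converges absolutely, so statement 3 holds with $L_i\equiv 1$, yet
\[
d_k(1/N,0)^2=\sum_{i\ge 1}\frac{\sin^2(i/N)}{i^2}\;\ge\;\sum_{i=1}^{N}\frac{(2/\pi)^2\,(i/N)^2}{i^2}=\frac{4}{\pi^2 N},
\]
so $d_k(1/N,0)/d_\inputset{X}(1/N,0)\ge(2/\pi)\sqrt{N}\to\infty$. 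Since $d_k(x,x')=\sup_{\|f\|_k\le 1}|f(x)-f(x')|$, statement 1 fails for $\alpha=1$ (the RKHS functions here are only H\"older of order about $1/2$, consistent with the $\alpha/2$ loss in \Cref{properties:lipschitz:hoelderContinuousKernelsAndRKHSfuncs}).

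For what it is worth, the paper's own proof bridges this same gap in its $2\Rightarrow 1$ step by estimating $\sum_i|\langle f,f_i\rangle_k|L_i\le(\sup_i L_i)\sum_i|\langle f,f_i\rangle_k|$ and then invoking $\|\cdot\|_{\ell_1(I)}\le\|\cdot\|_{\ell_2(I)}$, which is the reverse of the true inequality between these norms; so your refusal to write down a bound you could not justify was the right instinct. The implication does become valid if the hypothesis $\sup_{i\in I}L_i<\infty$ in statements 2 and 3 is strengthened to $\sum_{i\in I}L_i^2<\infty$: Cauchy--Schwarz against the Parseval identity $\sum_i|\langle f,f_i\rangle_k|^2=\|f\|_k^2$ then gives $C=(\sum_i L_i^2)^{1/2}$. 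But under that stronger hypothesis the orthonormal-basis argument no longer yields $1\Rightarrow 2$, so the three-way equivalence as stated should be treated with caution rather than patched within your current plan.
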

\begin{proof}
\emph{2 $\Rightarrow$ 1}
Let $(f_i)_{i\in I}$ be a Parseval frame in $H_k$, such that for all $i\in I$, $f_i$ is $\alpha$-H\"older continuous with H\"older constant $L_i\in\Rnn$, and $\sup_{i\in I} L_i < \infty$.
Let $f\in H_k$ and $x,x'\in\inputset{X}$ be arbitrary, then we have
\begin{align*}
    |f(x)-f(x')| & = \left|\sum_{i\in I} \langle f, f_i\rangle_k f_i(x) - \sum_{i\in I} \langle f, f_i\rangle_k f_i(x') \right| \\
    & = \left|\sum_{i\in I} \langle f, f_i\rangle_k (f_i(x) -  f_i(x')) \right| \\
    & \leq \sum_{i\in I} |\langle f, f_i\rangle_k| |f_i(x) - f_i(x')| \\
    & \leq \sum_{i\in I} |\langle f, f_i\rangle_k| L_i d_\inputset{X}(x,x')^\alpha \\
    & \leq \left(\sum_{i\in I} |\langle f, f_i\rangle_k|\right) \left(\sup_{i\in I} L_i \right) d_\inputset{X}(x,x')^\alpha \\
    & \leq \sqrt{\sum_{i\in I} |\langle f, f_i\rangle_k|^2}\left(\sup_{i\in I} L_i \right) d_\inputset{X}(x,x')^\alpha \\
    & = \|f\|_k \left(\sup_{i\in I} L_i \right) d_\inputset{X}(x,x')^\alpha.
\end{align*}
In the first inequality we used that $(f_i)_{i\in I}$ is a Parseval frame, and that norm convergence (in $H_k$) implies pointwise convergence.
For the first inequality, we used the triangle inequality, and for the second inequality we used the assumption that $f_i$ is $\alpha$-H\"older continuous with H\"older constant $L_i$.
In the last inequality, we used 
\begin{align*}
    \sum_{i\in I} |\langle f, f_i\rangle_k| & = \|(\langle f, f_i\rangle_k)_{i\in I}\|_{\ell_1(I)} \\
    & \leq  \|(\langle f, f_i\rangle_k)_{i\in I}\|_{\ell_2(I)} \\
    & = \sqrt{\sum_{i\in I} |\langle f, f_i\rangle_k|^2}.
\end{align*}

\emph{2 $\Rightarrow$ 1}
Let $(e_i)_{i\in I}$ be an ONB of $H_k$, so $\|e_i\|_k=1$ for all $i\in I$. By assumption, all $e_i$ are $\alpha$-H\"older continuous with H\"older constant $1$, and since an ONB is a Parseval frame, the claim follows.

\emph{2 $\Rightarrow$ 3}
This implication follows immediately from \Cref{thm:papadakis}.

\emph{3 $\Rightarrow$ 2}
Let $(f_i)_{i\in I}$ be a family of function as given in the third item. By \Cref{thm:papadakis}, $f_i\in H_k$ for all $i\in I$, and $(f_i)_{i\in I}$ forms a Parseval frame, so this family of functions fulfills the conditions in the second item.
\end{proof}
Since orthonormal bases (ONBs) are Parseval frames, we get immediately the following result.
\begin{corollary}
Let $(\inputset{X},d_\inputset{X})$ be a metric space, $k$ a kernel on $\inputset{X}$, and $\alpha\in\Rp$. The following statements are equivalent.
\begin{enumerate}
    \item All $f\in H_k$ are $\alpha$-H\"older continuous with H\"older constant $\|f\|_k$.
    \item There exists an ONB $(e_i)_{i\in I}$ in $H_k$ such that for all $i\in I$, $e_i$ is $\alpha$-H\"older continuous with H\"older constant 1.
    \item For all ONB $(e_i)_{i\in I}$ in $H_k$, and all $i\in I$, $e_i$ is $\alpha$-H\"older continuous with H\"older constant 1.
    \item For all $x,x'\in \inputset{X}$,
    \begin{equation}
        k(x,x') = \sum_{i\in I} e_i(x)\overline{e_i(x')},
    \end{equation}
    where the convergence is pointwise, and $(e_i)_{i\in I}$ is an ONB $(e_i)_{i\in I}$ in $H_k$ such that for all $i\in I$, $e_i$ is $\alpha$-H\"older-continuous with H\"older constant 1.
\end{enumerate}
\end{corollary}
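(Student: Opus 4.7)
My plan is to derive this corollary from \Cref{properties:lipschitz:characterizationHoelderInducingViaSeries} by specializing the Parseval frame to an orthonormal basis and tracking constants carefully. The key observation is that any ONB $(e_i)_{i\in I}$ in $H_k$ is a Parseval frame whose elements satisfy $\|e_i\|_k = 1$, so the condition ``$L_i = 1$ for all $i$'' translates directly to $\sup_{i\in I} L_i = 1$, which is exactly what yields the universal H\"older coefficient $\|f\|_k$ (rather than some $C\|f\|_k$) in condition 1.

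For $1 \Rightarrow 3$, I would fix an arbitrary ONB $(e_i)_{i\in I}$ of $H_k$; since $\|e_i\|_k = 1$, applying condition 1 to each $e_i$ gives that $e_i$ is $\alpha$-H\"older continuous with constant $1$. The implication $3 \Rightarrow 2$ is trivial because every Hilbert space admits an ONB. For $2 \Rightarrow 1$, I would re-run the chain of inequalities from the proof of \Cref{properties:lipschitz:characterizationHoelderInducingViaSeries} with the Parseval frame taken to be the ONB from condition 2 and $L_i = 1$, obtaining the bound $|f(x)-f(x')| \leq \|f\|_k \bigl(\sup_{i\in I} L_i\bigr) d_\inputset{X}(x,x')^\alpha = \|f\|_k d_\inputset{X}(x,x')^\alpha$, which is exactly condition 1.

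For the equivalence $2 \Leftrightarrow 4$, I would appeal to \Cref{thm:papadakis}. Given an ONB of $\alpha$-H\"older continuous elements as in condition 2, the first part of Papadakis' theorem yields the pointwise expansion $k(x,x') = \sum_i e_i(x)\overline{e_i(x')}$ required in condition 4. Conversely, given a family $(e_i)_{i\in I}$ as in condition 4 that is additionally assumed to be an ONB of $H_k$, the second part of Papadakis' theorem guarantees that the $e_i$ belong to $H_k$ and form a Parseval frame, so under the ONB hypothesis and the uniform H\"older constant $1$ we recover condition 2.

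The only point that requires care is the bookkeeping of constants: to obtain condition 1 with exactly the Hölder constant $\|f\|_k$ (rather than $C\|f\|_k$ for some $C > 0$), one needs the individual H\"older constants to satisfy $\sup_i L_i = 1$, which is guaranteed by the assumption $L_i = 1$ built into conditions 2 and 3. Beyond this, no additional estimate beyond what is already carried out in the preceding theorem is needed, and I anticipate no substantial obstacle since the corollary is, structurally, a clean specialization of \Cref{properties:lipschitz:characterizationHoelderInducingViaSeries}.
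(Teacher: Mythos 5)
Your proposal is correct and follows essentially the same route as the paper, which states the corollary as an immediate specialization of \Cref{properties:lipschitz:characterizationHoelderInducingViaSeries} using the fact that an ONB is a Parseval frame with $\|e_i\|_k=1$ (indeed, the argument you give for $1\Rightarrow 3$ is verbatim the one appearing inside the theorem's own proof). The constant bookkeeping you highlight ($\sup_i L_i=1$ yielding the H\"older constant $\|f\|_k$ exactly) is precisely the point of the specialization, and the appeal to \Cref{thm:papadakis} for item 4 matches the paper's treatment.
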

\subsection{Ranges of integral operators}
It is well-known that there is a close connection between the theory of RKHSs and integral operators.
For example, for RKHSs defined on measure spaces and under suitable technical assumptions, Mercer's theorem allows a spectral decomposition of the reproducing kernel, and an explicit description of the RKHS in terms of eigenfunctions of a related integral operator. For details, we refer to \cite[Section~4.5]{SC08}.
Moreover, integral operators defined using the reproducing kernel of an RKHS can have ranges contained in the RKHS under suitable assumptions, cf. \cite[Theorem~6.26]{SC08}. This motivates the study of H\"older continuity properties for functions in the image set of integral operators.
\paragraph{A general result} Before embarking on this task, we present a result for rather general integral maps. It is essentially a direct generalization of \cite[Theorem~5.1]{ferreira2013positive}.
\begin{proposition} \label{properties:lipschitz:hoelderContinuityIntOpRange}
Let $(\inputset{Y},\setsys{A},\mu)$ be a measure space, $(\inputset{X},d_\inputset{X})$ a metric space, $1 < p,q < \infty$ with $1/p+1/q=1$, and $k:\inputset{X}\times\inputset{Y}\rightarrow\K$ a function such that the following holds.
\begin{enumerate}
    \item For all $x\in\inputset{X}$, the function $k(x,\cdot)$ is measurable.
    \item For all $g \in L^q(\inputset{Y}, \setsys{A},\mu,\K)$ and all $x\in\inputset{X}$, $k(x,\cdot)\cdot g \in L^1(\inputset{Y}, \setsys{A},\mu,\K)$.
    \item There exists $\alpha\in\Rp$, $L_\alpha \in \Ellp^p(\inputset{Y}, \setsys{A},\mu,\Rnn)$, such that for $\mu$-almost all $y\in\inputset{Y}$, the function $k(\cdot,y)$ is $\alpha$-H\"older continuous with H\"older constant $L_\alpha(y)$.
\end{enumerate}
In this case,
\begin{equation}
    S_k:  L^q(\inputset{Y}, \setsys{A},\mu,\K) \rightarrow \K^\inputset{X}, \quad
    (S_k g)(x) = \int_\inputset{Y} k(x,y)g(y)\mathrm{d}\mu(y)
\end{equation}
is a well-defined linear mapping, and for all $g\in L^q(\inputset{Y}, \setsys{A},\mu,\K)$, the function $f=S_k g$ is $\alpha$-H\"older continuous with H\"older constant $\|L_\alpha\|_{\Ellp^p}\|g\|_{L^q}$.
\end{proposition}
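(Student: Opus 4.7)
The plan is to verify well-definedness and linearity of $S_k$ quickly from hypotheses 1 and 2, and then obtain the H\"older bound by a direct estimate of $|(S_k g)(x) - (S_k g)(x')|$ using hypothesis 3 together with H\"older's inequality.

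First I would note that for each $x \in \inputset{X}$ and each $g \in L^q(\inputset{Y},\setsys{A},\mu,\K)$, hypothesis 2 guarantees that $k(x,\cdot) g \in L^1$, so the integral defining $(S_k g)(x)$ exists in $\K$. Linearity in $g$ is immediate from linearity of the integral, so $S_k$ maps into $\K^\inputset{X}$ as a linear map.

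Next I would fix $g \in L^q(\inputset{Y},\setsys{A},\mu,\K)$ and $x,x' \in \inputset{X}$, and estimate
\begin{align*}
    |(S_k g)(x) - (S_k g)(x')|
    & = \left| \int_\inputset{Y} \bigl(k(x,y) - k(x',y)\bigr) g(y)\, \mathrm{d}\mu(y) \right| \\
    & \leq \int_\inputset{Y} |k(x,y) - k(x',y)|\, |g(y)|\, \mathrm{d}\mu(y) \\
    & \leq \int_\inputset{Y} L_\alpha(y)\, d_\inputset{X}(x,x')^\alpha\, |g(y)|\, \mathrm{d}\mu(y) \\
    & = d_\inputset{X}(x,x')^\alpha \int_\inputset{Y} L_\alpha(y)\, |g(y)|\, \mathrm{d}\mu(y),
\end{align*}
where in the second inequality I use hypothesis 3 (the exceptional $\mu$-null set does not affect the integral). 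A final application of H\"older's inequality with conjugate exponents $p,q$ bounds the last integral by $\|L_\alpha\|_{\Ellp^p} \|g\|_{L^q}$, yielding the claimed H\"older constant.

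The only points that require a moment of care are: (i) the integrand $|k(\cdot,y) - k(\cdot,y)|\cdot|g|$ is genuinely integrable, which follows since the difference $k(x,\cdot) g - k(x',\cdot) g$ is $L^1$ by hypothesis 2, so Fubini/linearity legitimizes pulling out the constant $d_\inputset{X}(x,x')^\alpha$; (ii) the pointwise H\"older bound on $k(\cdot,y)$ holds only for $y$ outside some null set $N$, but since $\mu(N)=0$ the integral over $N$ contributes nothing, so the second inequality above is valid. I expect no real obstacle; the main subtlety is keeping track of the "$\mu$-almost all" quantifier in hypothesis 3 and combining it cleanly with H\"older's inequality, but once the $d_\inputset{X}(x,x')^\alpha$ factor is pulled out of the integral this reduces to a standard $L^p$-$L^q$ duality estimate.
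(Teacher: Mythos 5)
Your proposal is correct and follows essentially the same route as the paper's proof: well-definedness and linearity from hypothesis 2, then the pointwise estimate via hypothesis 3 followed by H\"older's inequality with exponents $p,q$. Your explicit handling of the $\mu$-null set and the retention of the exponent $\alpha$ on $d_\inputset{X}(x,x')^\alpha$ (which the paper's displayed chain momentarily drops, evidently a typo) are both fine and, if anything, slightly more careful than the original.
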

\begin{proof}
Since for all $g \in L^q(\inputset{Y}, \setsys{A},\mu,\K)$ and all $x\in\inputset{X}$ the function $k(x,\cdot)g\in  L^1(\inputset{Y}, \setsys{A},\mu,\K)$, the mapping $S_k$ is well-defined. The linearity is now clear.

Let $g \in L^q(\inputset{Y}, \setsys{A},\mu,\K)$, define $f=S_k g$, and let $x,x'\in\inputset{X}$ be arbitrary, then
\begin{align*}
    |f(x)-f(x')| & = \left| \int_\inputset{Y} (k(x,y) - k(x',y))g(y)\mathrm{d}\mu(y)\right| \\
    & \leq  \int_\inputset{Y} |k(x,y) - k(x',y)| |g(y)| \mathrm{d}\mu(y) \\
    & \leq  \int_\inputset{Y} L_\alpha(y) |g(y)| \mathrm{d}\mu(y)d_\inputset{X}(x,x') \\
    & \leq \|L_\alpha\|_{\Ellp^p} \|g\|_{L^q} d_\inputset{X}(x,x'),
\end{align*}
so $f$ is indeed $\alpha$-H\"older continuous with H\"older constant $\|L_\alpha\|_{\Ellp^p}\|g\|_{L^q}$.
\end{proof}

\paragraph{Example}
To illustrate \Cref{properties:lipschitz:hoelderContinuityIntOpRange}, we consider the rather general class of integral operators described in \cite[Abschnitt~6.3]{weidmann2000lineare}.
Let $(\inputset{X},\setsys{A}_\inputset{X},\mu)$ and $(\inputset{Y},\setsys{A}_\inputset{Y},\nu)$ be measure spaces, $1 < p,q < \infty$ with $1/p+1/q=1$, and $k: \inputset{X}\times\inputset{Y}\rightarrow\K$ be measurable. Assume that for all $g\in L^q(\inputset{Y},\setsys{A}_\inputset{Y},\nu)$ and $\mu$-almost all $x\in\inputset{X}$, $k(x,\cdot)g\in L^1(\inputset{Y},\setsys{A}_\inputset{Y},\nu)$, and that by defining ($\mu$-almost all) $x\in\inputset{X}$
\begin{equation}
    (T_k g)(x) = \int_\inputset{Y} k(x,y)g(y)\mathrm{d}\nu(y)
\end{equation}
we get $T_kg \in L^p(\inputset{X},\setsys{A}_\inputset{X},\mu)$.
Under these conditions, $T_k: L^q(\inputset{Y},\setsys{A}_\inputset{Y},\nu) \rightarrow  L^p(\inputset{X},\setsys{A}_\inputset{X},\mu)$ is a well-defined, linear and bounded operator.

Assume furthermore that $(\inputset{X},d_\inputset{X})$ is a metric space, and that there exists $\alpha\in\Rp$ and  $L_\alpha \in \Ellp^p(\inputset{Y}, \setsys{A},\mu,\Rnn)$, such that for $\mu$-almost all $y\in\inputset{Y}$, the function $k(\cdot,y)$ is $\alpha$-H\"older continuous with H\"older constant $L_\alpha(y)$. 
Let $g\in  L^q(\inputset{Y},\setsys{A}_\inputset{Y},\nu)$, then there exists a $\mu$-nullset $\inputset{N}_g$ such that (setting for brevity $\inputset{X}_g=\inputset{X}\setminus\inputset{N}_g$) $f: \inputset{X}_g \rightarrow \K$, $f(x)=(T_k g)(x)$ is well-defined.
\Cref{properties:lipschitz:hoelderContinuityIntOpRange} now ensures that $f$ is $\alpha$-H\"older continuous with H\"older constant $\|L_\alpha\|_{\Ellp^p}\|g\|_{L^q}$, though $f$ is only defined on the restricted metric space $(\inputset{X}_g, d_\inputset{X}\lvert_{\inputset{X}_g \times \inputset{X}_g})$.

In particular, each element\footnote{Recall that this is an equivalence class of functions on $\inputset{X}$.} of the image set of $T_k$ contains a $\mu$-almost everywhere defined function that is $\alpha$-H\"older continuous.

We can strengthen this result. Let $\setsys{A}_\inputset{X}$ be the Borel $\sigma$-algebra on $\inputset{X}$, and assume that $\mu(U)>0$ for all open nonempty $U\subseteq \inputset{X}$. In this case, $\inputset{X}_g$ is dense in $\inputset{X}$, since otherwise $\inputset{N}_g$ contains a nonempty open set $U$, and hence $\mu(\inputset{N}_g) \geq \mu(U) >0$, a contradiction to the fact that $\inputset{N}_g$ is a $\mu$-nullset.
Since $f$ is defined on a dense subsetset of $\inputset{X}$, and it is continuous (since it is $\alpha$-H\"older continuous on $\inputset{X}_g$), there exists a unique extension $\bar f: \inputset{X}\rightarrow\K$ that is also $\alpha$-H\"older continuous.
Defining $\bar T_k g := \bar f$, we thus arrived at a linear operator from $L^q(\inputset{Y},\setsys{A}_\inputset{Y},\nu)$ into $\Ellp(\inputset{X},\setsys{A}_\inputset{X},\mu)$ with its range space consisting of $\alpha$-H\"older continuous functions.

\paragraph{Integral operators into RKHSs} Let us return to the setting of RKHSs. If an RKHS is defined on a measure space, and the kernel fulfills an integrability condition, then the RKHS consists of integrable functions, and the kernel allows the definition of a related integral operator with range contained in the RKHS.
The next result provides a sufficient condition for H\"older continuity of RKHS functions in the range of this integral operator.
\begin{proposition}
Let $(\inputset{X},d_\inputset{X})$ be a metric space, $(\inputset{X},\setsys{A},\mu)$ a $\sigma$-finite measure space,\footnote{$\setsys{A}$ can, but does not have to be the Borel $\sigma$-algebra on the metric space $\inputset{X}$.} $1<p,q<\infty$ with $1/p+1/q=1$, and $k:\inputset{X}\times\inputset{X}\rightarrow\K$ a measurable kernel such that $H_k$ is separable and
\begin{equation}
    \|k\|_{L^p} = \left(\int (k(x,x))^{\frac{p}{2}}\mathrm{d}\mu(x)\right)^{\frac{1}{p}} < \infty.
\end{equation}
Assume that there exist $\alpha\in\Rp$, $L_\alpha \in \Ellp^p(\inputset{X},\setsys{A},\mu,\Rnn)$ such that for $\mu$-almost all $x\in\inputset{X}$ the function $k(\cdot,x)$ is $\alpha$-H\"older continuous with H\"older constant $L_\alpha(x)$.

Under these conditions, 
\begin{equation}
    S_k: L^q(\inputset{X},\setsys{A},\mu,\K) \rightarrow H_k, \quad
    (S_k g)(x) = \int_\inputset{X} k(x,x')g(x')\mathrm{d}\mu(x')
\end{equation}
is a well-defined, bounded linear operator, and for all $g\in L^q(\inputset{X},\setsys{A},\mu,\K)$, the function $f=S_k g \in H_k$ is $\alpha$-H\"older continuous with H\"older constant $\|L_\alpha\|_{\Ellp^p}\|g\|_{L^q}$.

Finally, all functions in $H_k$ are $p$-integrable,\footnote{This means that for all $f\in H_k$, $\int_\inputset{X} |f(x)|^p\mathrm{d}\mu(x) <\infty$.} and if the inclusion $\identity: H_k \rightarrow L^p(\inputset{X},\setsys{A},\mu,\K)$ is injective, then the image of $S_k$ is dense in $H_k$.
\end{proposition}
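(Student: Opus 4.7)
The plan is to split the proof along the four assertions, leaning on two external results and one earlier proposition. The well-definedness of $S_k$ as a bounded linear map into $H_k$ (not merely into $\K^\inputset{X}$) is exactly the setting of \cite[Theorem~6.26]{SC08}, whose hypotheses (measurability of $k$, separability of $H_k$, and $\|k\|_{L^p}<\infty$) are given. I would briefly verify that these hypotheses imply the pointwise integral defining $S_k g$ makes sense, by noting $|k(x,x')|\le\sqrt{k(x,x)}\sqrt{k(x',x')}$ and applying Hölder's inequality to conclude $k(x,\cdot)\in L^p(\mu)$ with $\|k(x,\cdot)\|_{L^p}\le\sqrt{k(x,x)}\,\|k\|_{L^p}$, so $k(x,\cdot)g\in L^1(\mu)$ for every $g\in L^q(\mu)$.

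The $\alpha$-Hölder bound on $S_k g$ is then immediate from \Cref{properties:lipschitz:hoelderContinuityIntOpRange} applied with $\inputset{Y}=\inputset{X}$, since all three hypotheses of that proposition have been assumed here. For the $p$-integrability of every $f\in H_k$, I would use the reproducing property to write $|f(x)|=|\langle f,k(\cdot,x)\rangle_k|\le\|f\|_k\sqrt{k(x,x)}$, raise this to the $p$-th power, and integrate; the finiteness of $\|k\|_{L^p}$ yields $\|f\|_{L^p}\le\|f\|_k\,\|k\|_{L^p}$. In particular the inclusion $\identity:H_k\to L^p(\inputset{X},\setsys{A},\mu,\K)$ is a well-defined bounded linear operator.

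The density assertion is the step requiring the most care. I would first establish the identity
\begin{equation*}
    \langle S_k g, f\rangle_{H_k} = \int_\inputset{X} g(x')\overline{f(x')}\,\mathrm{d}\mu(x') \quad \forall g\in L^q(\mu),\, f\in H_k,
\end{equation*}
by writing $S_k g$ as the Bochner integral $\int g(x')k(\cdot,x')\,\mathrm{d}\mu(x')$ in $H_k$, pulling the scalar product through the integral, and invoking the reproducing property $\langle k(\cdot,x'),f\rangle_k=\overline{f(x')}$. This identifies $S_k$ (up to the Riesz and $L^q$-$L^p$ duality identifications) with the adjoint of $\identity$. Given this, if $f\in H_k$ is orthogonal to $\image(S_k)$, the identity forces $\int g\overline{f}\,\mathrm{d}\mu=0$ for every $g\in L^q(\mu)$, hence $f=0$ in $L^p(\mu)$, i.e.\ $\identity f=0$. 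Injectivity of $\identity$ then gives $f=0$ in $H_k$, so $(\image S_k)^\perp=\{0\}$ and $\image(S_k)$ is dense in $H_k$. The main obstacle here is the justification of exchanging the Bochner integral with the $H_k$-inner product (equivalently, justifying that $S_k g$ really is the Bochner integral of $g(x')k(\cdot,x')$); this is standard once one knows $x'\mapsto k(\cdot,x')$ is weakly measurable and $\|g(x')k(\cdot,x')\|_k=|g(x')|\sqrt{k(x',x')}$ is integrable by Hölder, but it is the one spot requiring careful bookkeeping.
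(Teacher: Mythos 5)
Your proposal is correct, and for the first two assertions it follows exactly the paper's route: the paper proves well-definedness and boundedness of $S_k$ by citing \cite[Theorem~6.26]{SC08}, and obtains the H\"older bound as a direct consequence of \Cref{properties:lipschitz:hoelderContinuityIntOpRange}, just as you do. The difference lies in the final assertions: the paper disposes of the $p$-integrability of $H_k$ and the density of $\image(S_k)$ by citing \cite[Theorem~6.26]{SC08} a second time, whereas you reconstruct the underlying arguments yourself --- the bound $\|f\|_{L^p}\leq\|f\|_k\|k\|_{L^p}$ via the reproducing property, and the density via the identity $\langle S_k g, f\rangle_k=\int g\overline{f}\,\mathrm{d}\mu$, which identifies $S_k$ as the adjoint of the inclusion and reduces density to injectivity of $\identity$ through $(\image S_k)^\perp=\{0\}$. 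Both routes are valid; the paper's is shorter and leans entirely on the external reference, while yours is self-contained and makes explicit where the hypotheses enter (separability of $H_k$ for strong measurability via Pettis, $\|k\|_{L^p}<\infty$ together with H\"older's inequality for the Bochner integrability of $x'\mapsto g(x')k(\cdot,x')$). The one point you flag as needing care --- interchanging the Bochner integral with the inner product and with point evaluations --- is indeed the only delicate step, and your justification is the standard and correct one.
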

\begin{proof}
That $S_k$ is well-defined, linear and bounded, follows from \cite[Theorem~6.26]{SC08}.
The statement on the H\"older continuity of the functions in the images of $S_k$ is a direct consequence of \Cref{properties:lipschitz:hoelderContinuityIntOpRange}.
The last claim follows again from \cite[Theorem~6.26]{SC08}.
\end{proof}
\subsection{Feature mixture kernels}
\Cref{properties:lipschitz:characterizationHoelderInducingViaSeries} characterizes H\"older continuity inducing kernels via series expansion. However, these might be difficult to work with, so an alternative description of such kernels can be useful. The next result presents a very general construction which is based on a mixture of feature maps. It vastly generalizes a method apparently introduced in \cite{wu2018d2ke}.
\begin{theorem} \label{properties:lipschitz:featureMixtureHoelderInducing}
    Let $(\Omega,\setsys{A})$ be a measurable space, $\mu$ a finite nonnegative measure on $(\Omega,\setsys{A})$, $(\inputset{X},d_\inputset{X})$ a metric space, and $\fs$ a $\K$-Hilbert space.
    Furthermore, let $\fm(x,\cdot)\in\Ellp^2(\Omega,\setsys{A},\mu,\fs)$ for all $x\in\inputset{X}$.
    Finally, assume that there exist $\alpha,\:L_\fm\in\Rp$ such that for $\mu$-almost all $\omega\in\Omega$, $\fm(\cdot,\omega)$ is $\alpha$-H\"older continuous with H\"older constant $L_\fm$.
    Then
    \begin{equation}
        k(x,x') = \int_\Omega \langle \fm(x',\omega), \fm(x,\omega) \rangle_\fs \mathrm{d}\mu(\omega)
    \end{equation}
    is a well-defined kernel on $\inputset{X}$, and all $f\in H_k$ are $\alpha$-H\"older continuous with H\"older constant $L_\fm \sqrt{\mu(\Omega)}\|f\|_k$.
\end{theorem}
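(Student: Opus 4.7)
The plan is to realize $k$ as an inner product of a feature map taking values in the Bochner space $L^2(\Omega,\setsys{A},\mu,\fs)$, and then exploit \Cref{properties:lipschitz:rkhsFuncslipschitzContinuousKernelMetric} together with a direct bound on the kernel metric. First I would verify that the integral defining $k(x,x')$ is well-defined: the integrand $\omega \mapsto \langle \fm(x',\omega), \fm(x,\omega)\rangle_\fs$ is measurable and, by the Cauchy–Schwarz inequality in $\fs$ applied pointwise, dominated by $\|\fm(x',\omega)\|_\fs \|\fm(x,\omega)\|_\fs$, which lies in $L^1(\Omega,\setsys{A},\mu,\R)$ by Cauchy–Schwarz in $L^2$ since both $\fm(x,\cdot)$ and $\fm(x',\cdot)$ are assumed to be in $\Ellp^2(\Omega,\setsys{A},\mu,\fs)$.

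Next I would introduce $\Psi: \inputset{X}\rightarrow L^2(\Omega,\setsys{A},\mu,\fs)$ via $\Psi(x) = \fm(x,\cdot)$, which is well-defined by hypothesis. The defining formula for $k$ reads exactly $k(x,x') = \langle \Psi(x'), \Psi(x)\rangle_{L^2(\Omega,\setsys{A},\mu,\fs)}$, so $(L^2(\Omega,\setsys{A},\mu,\fs), \Psi)$ is a feature space–feature map pair and $k$ is a kernel on $\inputset{X}$. By the lemma identifying the kernel metric with the metric induced by any feature map, we have $d_k(x,x') = \|\Psi(x) - \Psi(x')\|_{L^2(\Omega,\setsys{A},\mu,\fs)}$ for all $x,x'\in\inputset{X}$.

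Now I would bound $d_k$. Using the $\mu$-almost everywhere $\alpha$-H\"older estimate on $\fm(\cdot,\omega)$ pointwise in the Bochner integral,
\begin{equation*}
d_k(x,x')^2 = \int_\Omega \|\fm(x,\omega) - \fm(x',\omega)\|_\fs^2 \mathrm{d}\mu(\omega) \leq \int_\Omega L_\fm^2 d_\inputset{X}(x,x')^{2\alpha}\mathrm{d}\mu(\omega) = L_\fm^2 \mu(\Omega) d_\inputset{X}(x,x')^{2\alpha},
\end{equation*}
where the exceptional $\mu$-null set does not affect the integral. Taking square roots yields $d_k(x,x') \leq L_\fm \sqrt{\mu(\Omega)} d_\inputset{X}(x,x')^\alpha$. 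Finally, for any $f\in H_k$, \Cref{properties:lipschitz:rkhsFuncslipschitzContinuousKernelMetric} gives $|f(x)-f(x')| \leq \|f\|_k d_k(x,x')$, and combining with the previous estimate produces the claimed H\"older constant $L_\fm\sqrt{\mu(\Omega)}\|f\|_k$.

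The routine parts are the Cauchy–Schwarz estimates and the final chaining of inequalities; the only subtlety is being careful that the $\mu$-null set on which the H\"older bound for $\fm(\cdot,\omega)$ may fail does not obstruct the pointwise-in-$\omega$ integrand bound, which is immediate since a null set contributes nothing to the Bochner integral. No single step is a serious obstacle, and the finiteness of $\mu$ is essential and enters exactly through the factor $\sqrt{\mu(\Omega)}$.
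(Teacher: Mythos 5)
Your proof is correct and follows essentially the same route as the paper: the same double Cauchy--Schwarz argument for well-definedness, and the same key estimate $d_k(x,x')^2=\int_\Omega\|\fm(x,\omega)-\fm(x',\omega)\|_\fs^2\,\mathrm{d}\mu(\omega)\leq L_\fm^2\mu(\Omega)d_\inputset{X}(x,x')^{2\alpha}$ combined with \Cref{properties:lipschitz:rkhsFuncslipschitzContinuousKernelMetric}. The only (cosmetic) difference is that you establish the kernel property by exhibiting the Bochner space $L^2(\Omega,\setsys{A},\mu,\fs)$ as an explicit feature space, which is arguably cleaner given the paper's definition of a kernel, whereas the paper verifies positive semidefiniteness by a direct computation.
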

\begin{proof}
First, we show that $k$ is well-defined. Let $x,x'\in\inputset{X}$, then $\|\Phi(x,\cdot)\|_\fs, \|\Phi(x',\cdot)\|_\fs$ are square-integrable, so we get 
\begin{align*}
    \int_\Omega | \langle \fm(x',\omega), \fm(x,\omega) \rangle_\fs| \mathrm{d}\mu(\omega)
    & \leq \int_\Omega \|\fm(x,\omega)\|_\fs \|\fm(x',\omega)\|_\fs \mathrm{d}\mu(\omega) \\
    & \leq \left( \int_\Omega \|\Phi(x,\omega)\|_\fs^2 \mathrm{d}\mu(\omega)\right)^\frac12 \left( \int_\Omega \|\Phi(x',\omega)\|_\fs^2 \mathrm{d}\mu(\omega)\right)^\frac12 < \infty,
\end{align*}
where we  used Cauchy-Schwarz first in $\fs$, then in $\Ellp^2$.

Next, we show that $k$ is kernel by verifying that it is positive semidefinite. Let $x_1,\ldots,x_N\in\inputset{X}$ and $c_1,\ldots,c_N\in\C$ be arbitrary, then
\begin{align*}
    \sum_{i,j=1}^N c_i \overline{c_j} k(x_j,x_i) & = \int_\Omega  \sum_{i,j=1}^N c_i \overline{c_j} \langle \fm(x_j,\omega), \fm(x_i,\omega) \rangle_\fs \mathrm{d}\mu(\omega) \\
    & = \int_\Omega \left\langle \sum_{i=1}^N c_i \fm(x_i,\omega), \sum_{j=1}^N c_j \fm(x_j,)\right\rangle_\fs \mathrm{d}\mu(\omega) \\
    & = \int_\Omega \left\|  \sum_{i=1}^N c_i \fm(x_i,\omega) \right\|_\fs^2\mathrm{d}\mu(\omega) \\
    & \geq 0,
\end{align*}
so $k$ is indeed positive semidefinite.

Finally, let $f\in H_k$ and $x,x'\in\inputset{X}$ be arbitrary, then $|f(x)-f(x')| \leq \|f\|_k d_k(x,x')$. Observe now that
\begin{align*}
    d_k(x,x')^2 & = k(x,x) + k(x,x') + k(x',x) + k(x',x') \\
    & = \int_\Omega \langle \fm(x,\omega), \fm(x,\omega)\rangle_\fs + \langle \fm(x,\omega), \fm(x',\omega)\rangle_\fs \\
    & \hspace{0.5cm} + \langle \fm(x',\omega), \fm(x,\omega)\rangle_\fs + \langle \fm(x',\omega), \fm(x',\omega)\rangle_\fs \mathrm{d}\mu(\omega) \\
    & = \int_\Omega \langle \fm(x,\omega)-\fm(x',\omega),  \fm(x,\omega)-\fm(x',\omega)\rangle_\fs \mathrm{d}\mu(\omega) \\
    & =  \int_\Omega \|\fm(x,\omega)-\fm(x',\omega)\|_\fs \mathrm{d}\mu(\omega) \\
    & \leq \int_\Omega L_\fm^2 d_\inputset{X}(x,x')^{2\alpha}\mathrm{d}\mu(\omega)\\
    & = L_\fm^2 \mu(\Omega) d_\inputset{X}(x,x')^{2\alpha},
\end{align*}
so we get
\begin{align*}
    |f(x)-f(x')| \leq \|f\|_k d_k(x,x') 
    \leq L_\fm \sqrt{\mu(\Omega)}\|f\|_k d_\inputset{X}(x,x')^\alpha.
\end{align*}
\end{proof}
If the nonnegative measure in the preceding result is a probability measure, we get the following result as a special case.
\begin{corollary}
Let $(\inputset{X},d_\inputset{X})$ be a metric space, $\fs$ a $\K$-Hilbert space, and $(\fm(x))_{x\in\inputset{X}}$ a family of square-integrable $\fm$-valued random variables.
Assume that there exist $\alpha,L_\fm\in\Rp$ such that $\Phi$ is almost surely $\alpha$-H\"older continuous with H\"older constant $L_\fm$. Then
\begin{equation}
    k(x,x') = \E[\langle \fm(x'), \fm(x)\rangle_\fs]
\end{equation}
is a well-defined kernel on $\inputset{X}$, and all $f\in H_k$ are $\alpha$-H\"older continuous with H\"older constant $L_\fm\|f\|_k$.
\end{corollary}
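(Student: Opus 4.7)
The plan is to deduce this corollary as a direct specialization of \Cref{properties:lipschitz:featureMixtureHoelderInducing}, translating the probabilistic language into the measure-theoretic setup of that theorem.

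First, I would fix the underlying probability space $(\Omega,\setsys{A},\Pb)$ carrying the random variables $\fm(x)$, and observe that a probability measure is in particular a finite nonnegative measure with $\Pb(\Omega)=1$. Viewing each random variable $\fm(x):\Omega\to\fs$ as a function $\fm(x,\cdot)$ on $\Omega$, the hypothesis that $\fm(x)$ is square-integrable as an $\fs$-valued random variable is exactly the condition $\fm(x,\cdot)\in\Ellp^2(\Omega,\setsys{A},\Pb,\fs)$. Likewise, ``almost surely $\alpha$-H\"older continuous with H\"older constant $L_\fm$'' means that for $\Pb$-almost all $\omega\in\Omega$ the map $\fm(\cdot,\omega)$ is $\alpha$-H\"older continuous with H\"older constant $L_\fm$, which is exactly the hypothesis of the theorem with constant function $L_\fm$.

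Next, I would note that by the definition of expectation,
\begin{equation*}
\E[\langle \fm(x'),\fm(x)\rangle_\fs] = \int_\Omega \langle \fm(x',\omega),\fm(x,\omega)\rangle_\fs \,\mathrm{d}\Pb(\omega),
\end{equation*}
so the kernel given by the expectation coincides with the kernel constructed in \Cref{properties:lipschitz:featureMixtureHoelderInducing}. All hypotheses of the theorem are therefore satisfied, and the theorem delivers both that $k$ is a well-defined kernel on $\inputset{X}$ and that every $f\in H_k$ is $\alpha$-H\"older continuous with H\"older constant $L_\fm\sqrt{\Pb(\Omega)}\,\|f\|_k$.

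Finally, I would simplify $\sqrt{\Pb(\Omega)}=\sqrt{1}=1$, which yields the claimed H\"older constant $L_\fm\|f\|_k$. There is essentially no obstacle here beyond the bookkeeping of matching up the probabilistic and measure-theoretic notations; the only substantive point is recognizing that a probability measure is a special case of a finite measure, which makes the square-root factor collapse and gives the clean bound.
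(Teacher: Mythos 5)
Your proposal is correct and matches the paper's approach exactly: the paper also obtains this corollary as an immediate specialization of \Cref{properties:lipschitz:featureMixtureHoelderInducing} to a probability measure, with the factor $\sqrt{\mu(\Omega)}=1$ collapsing. Nothing is missing.
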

The importance of this result is the fact that the kernel $k$ described there is a \emph{random feature kernel} in the sense of \cite{rahimi2007random}. In particular, in practice $k(x,x')$ can be approximated by sampling from the random variables $\Phi(x),\Phi(x')$.

Finally, we can formulate another special case, which recovers the approach from \cite{wu2018d2ke}.
\begin{proposition}
Let  $(\inputset{X},d_\inputset{X})$ be a metric space, $P$ a Borel probability measure on $\inputset{X}$, $\varphi: \Rnn\rightarrow\K$ an $\alpha$-H\"older-continuous function with H\"older-constant $L_\varphi$, and define $\phi: \inputset{X}\times\inputset{X}\rightarrow\K$ by $\phi(x,z)=\varphi(d_\inputset{X}(x,z))$.
If $\phi(x,\cdot) \in \Ellp^2(\inputset{X},P)$ for all $x\in\inputset{X}$, then
\begin{equation}
    k(x,x') = \int_\inputset{X} \phi(x',z) \overline{\phi(x,z)}\mathrm{d}P(z)
\end{equation}
is a well-defined kernel on $\inputset{X}$, and all $f\in H_k$ are $\alpha$-H\"older continuous with H\"older constant $L_\varphi \|f\|_k$.
\end{proposition}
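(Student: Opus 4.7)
The plan is to derive this statement as a direct specialization of \Cref{properties:lipschitz:featureMixtureHoelderInducing}. Concretely, I take $(\Omega,\setsys{A},\mu) = (\inputset{X}, \Borel(\inputset{X}), P)$, which is a finite (in fact probability) measure with $\mu(\Omega)=1$, and let the target Hilbert space be $\fs = \K$ equipped with its canonical scalar product $\langle a,b\rangle_\K = a\overline{b}$. Define the feature map $\fm: \inputset{X}\times\Omega \rightarrow \fs$ by $\fm(x,z) := \phi(x,z) = \varphi(d_\inputset{X}(x,z))$. Then $\langle \fm(x',z), \fm(x,z)\rangle_\fs = \phi(x',z)\overline{\phi(x,z)}$, so the integral defining $k$ is precisely the one appearing in \Cref{properties:lipschitz:featureMixtureHoelderInducing}.

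Next I would verify the hypotheses of that theorem. The integrability condition $\fm(x,\cdot)\in\Ellp^2(\Omega,\setsys{A},\mu,\fs)$ for every $x\in\inputset{X}$ is exactly the assumed condition $\phi(x,\cdot)\in\Ellp^2(\inputset{X},P)$. The remaining hypothesis is that, for (almost) every $z\in\Omega$, the map $x\mapsto \fm(x,z)$ is $\alpha$-H\"older continuous with a common H\"older constant. This is the only nontrivial step: for arbitrary $x,x'\in\inputset{X}$ and any $z\in\inputset{X}$, combining the $\alpha$-H\"older continuity of $\varphi$ with the reverse triangle inequality in $(\inputset{X},d_\inputset{X})$ and the monotonicity of $t\mapsto t^\alpha$ on $\Rnn$ yields
\begin{equation*}
    |\fm(x,z)-\fm(x',z)| = |\varphi(d_\inputset{X}(x,z)) - \varphi(d_\inputset{X}(x',z))| \leq L_\varphi |d_\inputset{X}(x,z) - d_\inputset{X}(x',z)|^\alpha \leq L_\varphi d_\inputset{X}(x,x')^\alpha,
\end{equation*}
so the required uniform H\"older condition holds with constant $L_\fm = L_\varphi$ for every $z\in\Omega$ (not merely almost every $z$).

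Applying \Cref{properties:lipschitz:featureMixtureHoelderInducing} now gives that $k$ is a well-defined kernel on $\inputset{X}$, and every $f\in H_k$ is $\alpha$-H\"older continuous with H\"older constant $L_\fm\sqrt{\mu(\Omega)}\|f\|_k = L_\varphi\sqrt{1}\|f\|_k = L_\varphi\|f\|_k$, as claimed. The main (and only) work is the one-line estimate above; everything else is bookkeeping needed to match the setup of the general feature-mixture theorem.
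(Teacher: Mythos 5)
Your proposal is correct and follows essentially the same route as the paper: both reduce the statement to \Cref{properties:lipschitz:featureMixtureHoelderInducing} with $\Omega=\inputset{X}$, $\mu=P$, $\fs=\K$, $\fm=\phi$, with the only real work being the estimate $|\varphi(d_\inputset{X}(x,z))-\varphi(d_\inputset{X}(x',z))|\leq L_\varphi d_\inputset{X}(x,x')^\alpha$ via the reverse triangle inequality. If anything, your chain of inequalities (bounding by $L_\varphi|d_\inputset{X}(x,z)-d_\inputset{X}(x',z)|^\alpha$ and then using monotonicity of $t\mapsto t^\alpha$) is the cleaner reading of what $\alpha$-H\"older continuity of $\varphi$ actually gives, whereas the paper's intermediate expression $L_\varphi|d_\inputset{X}(x,z)^\alpha-d_\inputset{X}(x',z)^\alpha|$ does not literally follow from that hypothesis.
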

\begin{proof}
We show that for all $z\in\inputset{X}$, the function $\phi(\cdot,z)$ is $\alpha$-H\"older continuous with H\"older constant $L_\varphi$.
For this, let $x,x'\in\inputset{X}$ be arbitrary, then
\begin{align*}
    |\phi(x,z)-\phi(x',z)| & = |\varphi(d_\inputset{X}(x,z)) - \varphi(d_\inputset{X}(x',z))| \\
    & \leq L_\varphi |d_\inputset{X}(x,z)^\alpha - d_\inputset{X}(x',z)^\alpha| \\
    & \leq L_\varphi d_\inputset{X}(x,x')^\alpha,
\end{align*}
where we used the inverse triangle inequality for the metric $(x,x')\mapsto d_\inputset{X}(x,x')^\alpha$ in the last step.

The result follows now from \Cref{properties:lipschitz:featureMixtureHoelderInducing} by choosing $\Omega=\inputset{X}$, $\mu=P$, $\fs=\K$, and $\fm=\phi$, and the fact that $P(\inputset{X})=1$.
\end{proof}

\section{Conclusion} \label{sec:conclusion}
We presented a comprehensive discussion of Lipschitz and H\"older continuity of RKHS functions. Starting with the well-known Lipschitz continuity w.r.t. the kernel (semi)metric, we then investigated H\"older-continuity w.r.t. a given metric, including converse results, i.e., consequences of H\"older continuity in function spaces related to RKHSs.
Finally, we provided characterizations as well as sufficient conditions for kernels inducing prescribed Lipschitz and H\"older continuity of their RKHS functions w.r.t. a given metric, an important aspect for applications.

The results presented here can be used to construct tailored kernels ensuring Lipschitz or H\"older continuous RKHS functions, or to check that existing kernels have such RKHS functions. Furthermore, because the results are \emph{quantitative}, they can be used in numerical methods. In particular, we are currently investigating their application in methods like \cite{fiedler2022learning} and \cite{sui2015safe}.

Finally, we would like to point out three interesting questions for future work. 

First, the Lipschitz and H\"older continuity in RKHS that we have been concerned with here, are of a strong \emph{uniform} nature, since the corresponding Lipschitz or H\"older constants are proportional to the RKHS function of the respective function, cf. the developments in \Cref{sec:lipschitz:hoelderContMetricSpace}. It would be interesting to investigate whether there exist kernels that enforce weaker, nonuniform Lipschitz or continuity properties.

Second, we investigated sufficient conditions for Lipschitz and H\"older continuity of RKHS functions via integral operators. However, all statements are restricted to the range space of the involved integral operators. Under some conditions, these range spaces are dense in RKHSs, so it would be interesting to investigate whether the Lipschitz and H\"older continuity properties transfers to the whole RKHS. Note that this is not trivial since in the H\"older constant in \Cref{properties:lipschitz:hoelderContinuityIntOpRange} involves the $L^q$-norm of the preimage function, not the RKHS norm of the image function.

Finally, the results in \Cref{sec:lipschitz:hoelderContMetricSpaceSuffCond} provide Lipschitz or H\"older constants involving the RKHS norm. However, it is unclear how conservative these results are, i.e., how much larger the Lipschitz or H\"older constants are compared to the best possible constants. Intuitively, it is clear that for generic RKHS functions there will be some conservatism. It would be interesting to investigate how big this conservatism is, and how it depends on properties of the kernel.

\bibliographystyle{plain} 
\bibliography{refs} 

\begin{thebibliography}{10}

\bibitem{alpay2021new}
Daniel Alpay and Palle~ET Jorgensen.
\newblock New characterizations of reproducing kernel hilbert spaces and
  applications to metric geometry.
\newblock {\em Opuscula Mathematica}, 41(3), 2021.

\bibitem{amann2011ordinary}
Herbert Amann.
\newblock {\em Ordinary differential equations: an introduction to nonlinear
  analysis}, volume~13.
\newblock Walter de gruyter, 2011.

\bibitem{aronszajn1950theory}
Nachman Aronszajn.
\newblock Theory of reproducing kernels.
\newblock {\em Transactions of the American mathematical society},
  68(3):337--404, 1950.

\bibitem{atteia1992hilbertian}
Marc Atteia.
\newblock {\em Hilbertian kernels and spline functions}.
\newblock Elsevier, 1992.

\bibitem{BTA04}
Alain Berlinet and Christine Thomas-Agnan.
\newblock {\em Reproducing kernel Hilbert spaces in probability and
  statistics}.
\newblock Springer Science \& Business Media, 2004.

\bibitem{calliess2014conservative}
J~Calliess.
\newblock {\em Conservative decision-making and inference in uncertain
  dynamical systems}.
\newblock PhD thesis, Oxford University, UK, 2014.

\bibitem{cobzacs2019lipschitz}
{\c{S}}tefan Cobza{\c{s}}, Radu Miculescu, Adriana Nicolae, et~al.
\newblock {\em Lipschitz functions}.
\newblock Springer, 2019.

\bibitem{evans2022partial}
Lawrence~C Evans.
\newblock {\em Partial differential equations}, volume~19.
\newblock American Mathematical Society, 2022.

\bibitem{fasshauer2015kernel}
Gregory~E Fasshauer and Michael~J McCourt.
\newblock {\em Kernel-based approximation methods using Matlab}, volume~19.
\newblock World Scientific Publishing Company, 2015.

\bibitem{ferreira2013positive}
JC~Ferreira and Valdir~Ant{\^o}nio Menegatto.
\newblock Positive definiteness, reproducing kernel hilbert spaces and beyond.
\newblock {\em Annals of Functional Analysis}, 4(1), 2013.

\bibitem{fiedler2021learning}
Christian Fiedler, Carsten~W Scherer, and Sebastian Trimpe.
\newblock Learning-enhanced robust controller synthesis with rigorous
  statistical and control-theoretic guarantees.
\newblock In {\em 2021 60th IEEE Conference on Decision and Control (CDC)},
  pages 5122--5129. IEEE, 2021.

\bibitem{fiedler2021practical}
Christian Fiedler, Carsten~W Scherer, and Sebastian Trimpe.
\newblock Practical and rigorous uncertainty bounds for gaussian process
  regression.
\newblock In {\em Proceedings of the AAAI conference on artificial
  intelligence}, volume~35, pages 7439--7447, 2021.

\bibitem{fiedler2022learning}
Christian Fiedler, Carsten~W Scherer, and Sebastian Trimpe.
\newblock Learning functions and uncertainty sets using geometrically
  constrained kernel regression.
\newblock In {\em 2022 IEEE 61st Conference on Decision and Control (CDC)},
  pages 2141--2146. IEEE, 2022.

\bibitem{garcia2000orthogonal}
Antonio~G Garcia.
\newblock Orthogonal sampling formulas: a unified approach.
\newblock {\em SIAM review}, 42(3):499--512, 2000.

\bibitem{ghosal2017fundamentals}
Subhashis Ghosal and Aad Van~der Vaart.
\newblock {\em Fundamentals of nonparametric Bayesian inference}, volume~44.
\newblock Cambridge University Press, 2017.

\bibitem{lehto1952some}
Olli Lehto.
\newblock {\em Some remarks on the kernel function in Hilbert function space}.
\newblock Suomalainen tiedeakatemia, 1952.

\bibitem{milanese2004set}
Mario Milanese and Carlo Novara.
\newblock Set membership identification of nonlinear systems.
\newblock {\em Automatica}, 40(6):957--975, 2004.

\bibitem{PR16}
Vern~I Paulsen and Mrinal Raghupathi.
\newblock {\em An introduction to the theory of reproducing kernel Hilbert
  spaces}, volume 152.
\newblock Cambridge university press, 2016.

\bibitem{pinter1995global}
J{\'a}nos~D Pint{\'e}r.
\newblock {\em Global optimization in action: continuous and Lipschitz
  optimization: algorithms, implementations and applications}, volume~6.
\newblock Springer Science \& Business Media, 1995.

\bibitem{rahimi2007random}
Ali Rahimi and Benjamin Recht.
\newblock Random features for large-scale kernel machines.
\newblock {\em Advances in neural information processing systems}, 20, 2007.

\bibitem{scholkopf2002learning}
Bernhard Scholkopf and Alexander~J Smola.
\newblock {\em Learning with kernels: support vector machines, regularization,
  optimization, and beyond}.
\newblock MIT press, 2018.

\bibitem{shawe2004kernel}
John Shawe-Taylor and Nello Cristianini.
\newblock {\em Kernel methods for pattern analysis}.
\newblock Cambridge university press, 2004.

\bibitem{SC08}
Ingo Steinwart and Andreas Christmann.
\newblock {\em Support vector machines}.
\newblock Springer Science \& Business Media, 2008.

\bibitem{sui2015safe}
Yanan Sui, Alkis Gotovos, Joel Burdick, and Andreas Krause.
\newblock Safe exploration for optimization with gaussian processes.
\newblock In {\em International conference on machine learning}, pages
  997--1005. PMLR, 2015.

\bibitem{tsybakov2009nonparametric}
Alexandre~B Tsybakov and Alexandre~B Tsybakov.
\newblock {\em Introduction to Nonparametric Estimation}.
\newblock Springer, 2009.

\bibitem{weidmann2000lineare}
Joachim Weidmann.
\newblock {\em Lineare Operatoren in Hilbertr{\"a}umen: Teil 1 Grundlagen}.
\newblock Springer-Verlag, 2000.

\bibitem{wendland2004scattered}
Holger Wendland.
\newblock {\em Scattered data approximation}, volume~17.
\newblock Cambridge university press, 2004.

\bibitem{wu2018d2ke}
Lingfei Wu, Ian En-Hsu Yen, Fangli Xu, Pradeep Ravikumar, and Michael Witbrock.
\newblock D2ke: From distance to kernel and embedding.
\newblock {\em arXiv preprint arXiv:1802.04956}, 2018.

\end{thebibliography}

\end{document}